\documentclass[11pt]{article}
\usepackage{mathrsfs}
\usepackage{amsmath}
\usepackage[colorlinks=false, pdfborder={0 0 0}]{hyperref}
\hypersetup{
    colorlinks=true,
    linkcolor=red
}
\usepackage{cleveref} 
\usepackage{amsfonts}
\usepackage{amssymb, cite}
\usepackage{color}
\usepackage{graphicx}
\usepackage{subcaption}
\usepackage{array}
\usepackage{booktabs}
\usepackage{diagbox}
\usepackage[T1]{fontenc}
\usepackage{tabularx}
\usepackage{amsthm}

\numberwithin{equation}{section}

\newtheorem{theorem}{Theorem}[section]
\newtheorem{lemma}{Lemma}[section]
\newtheorem{proposition}{Proposition}[section]
\newtheorem{remark}{Remark}[section]
\newtheorem{definition}{Definition}[section]

\newcommand\be{\begin{equation}}
\newcommand\ee{\end{equation}}
\newcommand\ber{\begin{eqnarray}}
\newcommand\eer{\end{eqnarray}}
\newcommand\berr{\begin{eqnarray*}}
\newcommand\eerr{\end{eqnarray*}}
\newcommand\bea{\begin{eqnarray}}
\newcommand\eea{\end{eqnarray}}

\title{ \bf\Large Minimizers and Weak Solutions for Singular Born--Infeld Type Functionals}
\author{
  {\bf Tengyang Liu}$^{a}$\thanks{Email: tyl619@henu.edu.cn; ORCiD: \href{https://orcid.org/0009-0009-1598-6638}{0009-0009-1598-6638}}
  {\bf and  Ruifeng Zhang}$^{a}$\thanks{Email: zrf615@henu.edu.cn; ORCiD: \href{https://orcid.org/0000-0002-4378-7427}{0000-0002-4378-7427}}\\[2mm]
  {\it\small $^a$College of Mathematics and Statistics, Henan University} \\[2mm]
  {\it\small P.R.China}
}
\date{}

\begin{document}

\maketitle

\begin{abstract}
In this paper, we investigate the relation between minimizers and weak solutions for a class of singular functionals arising from Born--Infeld type theories $\mathcal{L}(s)$. In the setting of an electrostatic field (hence $s=\frac{1}{2}|\nabla\phi|^2$), $\mathcal{L}(s)$ satisfies $\lim_{s\to(1/2)^-}\mathcal{L}(s)=+\infty$, which naturally enforces the finite gradient bound $|\nabla\phi|\le 1$, also called the truncation threshold. For a prescribed extended charge density $\rho$, we consider the relation between the weak solution of the system
\begin{equation}
\begin{cases}
-{\rm div}\left(b\left(\frac12|\nabla\phi|^2\right)\nabla\phi\right)=\rho, & \text{in } \mathbb{R}^N, \\
b(s)=\mathcal{L}'(s),\quad \lim_{s\to\frac12^-} b(s)=+\infty, \\
\lim_{|x|\to\infty}\phi(x)=0
\end{cases}
\notag
\end{equation}
and the minimizer $\phi_0$ of the singular functional. We propose a novel monotonic approximation method to handle the intrinsic singularities of $\mathcal{L}(s)$. We prove that the gradient of the minimizer never touches the singular boundary $|\nabla\phi|^2=1$; this structural result yields the key integrability property $b(\frac12|\nabla\phi_0|^2)|\nabla\phi_0|^2\in L^1(\mathbb{R}^N)$, the existence and uniqueness of the minimizer, and the corresponding variational inequality. Under the additional assumption that $\rho$ is radially distributed, we show that the minimizer is the unique weak solution. Furthermore, we establish the $C^1$ and $C^2$ regularity of the minimizer under suitable integrability conditions on $\rho$, and provide a uniform estimate for the strict spacelikeness condition $|\nabla\phi_0|\le 1-\epsilon$, where the parameter $\epsilon>0$ is explicitly characterized in terms of the spatial dimension, the spatial region, and $\rho$. Our results extend the classical Born--Infeld theory to a general class of singular Born--Infeld type theories, thereby providing a unified framework for the variational analysis and regularity of such singular functionals and systems.
\\

\noindent{\textbf{Key words}}: Monotonic approximation method, singular Born--Infeld type functional, minimizer, weak solution, regularity, strict spacelikeness
\\
\noindent{\textbf{MSC numbers}}: 35A15, 35B65, 35J62, 35J75, 47J20, 49J45
\end{abstract}

\section{Introduction}\label{s1}

Almost a century ago, Born and Infeld \cite{Born1933a,Born1934a,Born1933b,Born1934b} proposed a nonlinear modification of classical electrodynamics, aiming to resolve the long-standing problem of the infinite self-energy of point charges in Maxwell's theory. In their theory, the electromagnetic field satisfies nonlinear equations. In the electrostatic setting, the Lagrangian density takes the form
\begin{align}
\mathcal{L}_{\mathrm{Born-Infeld}}=1-\sqrt{1-2s},\quad s=\frac12|\nabla\phi|^2,\notag
\end{align}
where $\phi$ denotes the electrostatic potential. For weak fields ($|\nabla\phi|\ll1$), this Lagrangian reduces to the classical Maxwell form $\frac12|\nabla \phi|^2$; for strong fields, it naturally imposes the gradient constraint $|\nabla \phi|^2\le1$. The corresponding Euler--Lagrange equation is
\begin{align}
-{\rm div}\left(\frac{\nabla \phi}{\sqrt{1-|\nabla \phi|^2}}\right)=\rho\quad\text{in }\mathbb{R}^N,\label{1.1}
\end{align}
where $\rho$ is the charge density. For the case of a point charge of $\rho$, the solution has finite energy, thereby resolving the infinite self-energy problem of the Maxwell theory.

The electrostatic Born--Infeld energy functional is defined as
\begin{align}
I_{\rm Born-Infeld}(\phi)=\int_{\mathbb{R}^N}\bigl(1-\sqrt{1-|\nabla \phi|^2}\bigr)\,{\rm d}x-\langle\rho,\phi\rangle,
\label{1.2}
\end{align}
formally, critical points of \eqref{1.2} satisfy equation \eqref{1.1}. Although this functional is well-defined only under the condition $|\nabla \phi|^2\le 1$ almost everywhere, \eqref{1.1} exhibits singular behavior as $|\nabla \phi|^2\to 1^-$; consequently, it is not a $C^1$ functional on any classical Sobolev space. This lack of smoothness necessitates the use of convex analysis, subdifferentials, and non-smooth critical point theory \cite{Haarala2020,Szulkin1986,Ekeland1999}. Moreover, the natural energy space embeds non-compactly into $L^p(\mathbb{R}^N)$, and the gradient constraint $|\nabla \phi|^2\le 1$ is not preserved under weak convergence. Hence, even the existence of minimizers requires careful analysis.

Depending on the nature of the right-hand side of \eqref{1.1}, the contemporary investigations can be divided into several directions. When $\rho$ is a prescribed charge distribution, the existence and uniqueness of a minimizer follow from convexity \cite{Bonheure2016}; the main questions concern whether this minimizer satisfies the Euler--Lagrange equation and what its regularity properties are. Bonheure, d'Avenia and Pomponio \cite{Bonheure2016} studied the electrostatic Born--Infeld equation with extended charges. They proved that when $\rho$ is radially distributed or $\rho\in L^\infty_{\mathrm{loc}}$, the minimizer is also a weak solution, and they introduced a truncation technique that approximates the classical Born--Infeld operator by a sequence of $p$-Laplacian type operators. Bonheure and Iacopetti \cite{Bonheure2019a} established $W^{2,2}_{\mathrm{loc}}(\mathbb{R}^N)$ regularity of the minimizer under the assumptions $\rho\in L^q\cap L^m$ with $q>2N$ and $m\in[1,2_*]$, and under a smallness condition they proved that the minimizer is strictly spacelike and belongs to $C^{1,\alpha}_{\mathrm{loc}}$. Haarala \cite{Haarala2020} extended the regularity result to the case $\rho\in L^p$ with $p>N$, removing the smallness assumption and showing that the minimizer satisfies $|\nabla \phi|\le1-\theta$ ($\theta>0$) and is of class $C^{1,\alpha}$. Bonheure, Colasuonno and F\"oldes \cite{Bonheure2019b} treated the case of superpositions of point charges $\rho=\sum a_i\delta_{x_i}$, proving that away from the charges the minimizer is a weak solution, and under suitably small charges it is smooth and strictly spacelike except at the points themselves.

When the right-hand side of \eqref{1.1} depends on the unknown function itself, the equation becomes the nonlinear field equation $-\operatorname{div}\bigl(\nabla \phi/\sqrt{1-|\nabla \phi|^2}\bigr)=g(\phi)$. Early works relied on ODE methods (e.g., Azzollini \cite{Azzollini2014,Azzollini2016}) or truncation techniques (e.g., Bonheure, Derlet and De Coster \cite{Bonheure2012}) to obtain radial solutions for power-type nonlinearities $g(\phi)=|\phi|^{p-2}\phi$ with $p>2^*$. Mederski and Pomponio \cite{Mederski2023} extended the study to more general nonlinearities under Berestycki--Lions type conditions and proved the existence of radial solutions with finite energy. Bieganowski, Ikoma and Mederski \cite{Bieganowski2025} developed a direct variational method on the Poho\v{z}aev manifold, established the existence of ground state solutions, and provided a new characterization of the optimal constant in the Sobolev-type inequality. In recent years, normalized solutions have also attracted attention; Baldelli, Mederski and Pomponio \cite{Baldelli2025} systematically studied normalized solutions for a large class of quasilinear operators including the classical Born--Infeld operator, obtaining ground states in the $L^2$-subcritical case via a truncation method and solutions in the $L^2$-supercritical case via a mountain-pass approach. A significant breakthrough for the classical Born--Infeld operator was recently obtained by Bonheure and Iacopetti \cite{Bonheure2023}, who proved a sharp gradient estimate for the prescribed mean curvature equation in Lorentz--Minkowski space. As a consequence, they showed that for \(\rho \in L^q(\mathbb{R}^N)\cap L^m(\mathbb{R}^N)\) with \(q>N\) and \(m\in[1,2_*]\), the unique minimizer of the Born--Infeld energy is strictly spacelike and belongs to \(W_{\rm loc}^{2,q}(\mathbb{R}^N)\cap C^{1,\alpha}_{\rm loc}(\mathbb{R}^N)\) with \(\alpha=1-N/q\), removing the smallness assumption previously required in \cite{Bonheure2012} and achieving the optimal regularity threshold.

As can be seen, most of the aforementioned works focus on the classical Born--Infeld operator ${\rm div}(\nabla \phi/\sqrt{1-|\nabla \phi|^2})$. However, the classical Born--Infeld theory itself belongs to a broader conceptual framework: \emph{Born--Infeld type theories} $\mathcal{L}(s)$, which refer to a class of nonlinear electrodynamics models, where $s=\frac12|\nabla \phi|^2$ is the Maxwell action (as defined in this paper). The most notable signature of $\mathcal{L}(s)$ is that it satisfies the following normalization conditions:
\begin{align}
\mathcal{L}(0)=0,\quad \mathcal{L}'(0)=1,\label{1.3}
\end{align}
which guarantee that in the weak-field limit, $s\to 0$, the classical Maxwell theory $\mathcal{L}_{\rm Maxwell}(s)=s$ is approximated. Recently, the interdisciplinary research between Born--Infeld type theory and PDEs has preliminarily entered the horizon of researchers. For example, the exponential model $\mathcal{L}_{\rm exp}(s)=\frac{1}{\beta}(\mathrm{e}^{\beta s}-1)$\cite{Hendi2012,Hendi2013}, where $\beta$ is a positive physical parameter and it also enables any Born--Infeld type theory to approximate the Maxwell theory when $\beta\to0$. Correspondingly, Dai and Zhang \cite{Dai2023a,Dai2023b} mathematically studied the existence of ground state solutions for $\mathcal{L}_{\rm exp}(s)$, using variational methods and dynamical shooting methods respectively, and established the existence of nontrivial nonnegative solutions on bounded domains and on the whole space $\mathbb{R}^N$. These conclusions indicate that considering the coupling problem of Born--Infeld type theories and PDEs is a novel direction. In particular, we have noticed a fact in \cite{Yang2022} that some Born--Infeld type theories (such as $\mathcal{L}_{\rm exp}(s)$) do not possess the natural physical {\em truncation threshold} on $s$ that the classical Born--Infeld theory does. This enables Born--Infeld type theories to be distinguished according to whether they possess a physical truncation threshold or not. Now we introduce a representative model with this characteristic: the rational-function model\cite{Kruglov2015,Kruglov20152}, whose Lagrangian is given by
\begin{align}
    \mathcal{L}_{\rm rational}(s)=\frac s{1-2s},\quad s\le\frac12,\quad b_{\rm rational}(s)=\mathcal{L}'_{\rm rational}(s)=\frac{1}{(1-2s)^2}.\label{1.4}
\end{align}
This nonlinear model originates from the theory of vacuum birefringence and can generate a singular functional
\begin{align}
    I_{\rm rational}=\frac12\int_{\mathbb{R}^N}\frac{ |\nabla\phi|^2}{1- |\nabla\phi|^2}\,{\rm{d}}x-\langle\rho,\phi\rangle,\label{rational functional}
\end{align}
and the electrostatic system also has singularity: 
\begin{equation}
\begin{cases}
-{\rm div}\left(\dfrac{\nabla\phi}{(1-|\nabla\phi|^2)^2}\right)=\rho, & \text{in } \mathbb{R}^N, \\
\lim_{|x|\to\infty}\phi(x)=0.
\end{cases}
\label{rational system}
\end{equation}
We will conduct a detailed study of \eqref{1.4} in Section \ref{s3}. However, the Born--Infeld type theories with truncation thresholds are diverse, and they all have unique nonlinear structures, which means that each Born--Infeld type theory with a truncation threshold will produce a singular functional that is not $C^1$, given by
\begin{align}
    I=\int_{\mathbb{R}^N} \mathcal{L}\left(s\right)\,{\rm{d}}x-\langle\rho,\phi\rangle:=J(\phi)-\langle\rho,\phi\rangle,\quad \lim_{s\to\frac12^-}\mathcal{L}(s)=+\infty,\quad s=\frac{1}{2}|\nabla\phi|^2,\label{1.5}
\end{align}
and the corresponding system is 
\begin{equation}
\begin{cases}
-{\rm div}\left(b\left(\frac12|\nabla\phi|^2\right)\nabla\phi\right)=\rho, & \text{in } \mathbb{R}^N, \\
b(s)=\mathcal{L}'(s),\quad \lim_{s\to\frac12^-} b(s)=+\infty, \\
\lim_{|x|\to\infty}\phi(x)=0.
\end{cases}
\label{1.6}
\end{equation}
Therefore, whether minimizers of such functionals are in fact weak solutions of \eqref{1.6} has become a difficult but fascinating question. Actually, for general and singular Born--Infeld type operators of the form ${\rm div}(b(\frac12|\nabla \phi|^2)\nabla \phi)$, Mederski and Pomponio \cite{Mederski2023} proved the existence of radial solutions in the context of nonlinear field equations $-\operatorname{div}(b(|\nabla \phi|^2)\nabla \phi)=g(\phi)$. However, for the case of a prescribed charge distribution $\rho$ (i.e., the equation $-\operatorname{div}(b(|\nabla \phi|^2)\nabla \phi)=\rho$), the relation between the minimizer and the weak solution within this general operator framework has not yet been systematically established. 

Motivated by the above, in this paper, from a mathematical perspective, we focus on and must verify  whether the minimizers of such singularity functionals will appear on the singularity boundary. By working on the following functional space
\begin{align}
    \mathcal{A}=D^{1,2}(\mathbb{R}^N)\cap\left\{\phi\in C^{0,1}(\mathbb{R}^N)\,\big|\,|\nabla\phi|^2\le1\right\},\quad N\ge3,\label{1.7}
\end{align}
equipping with the norm
\begin{align}
    \|\phi\|_{D^{1,2}(\mathbb{R}^N)}:=\left(\int_{\mathbb{R}^N}|\nabla\phi|^2\,{\rm{d}}x\right)^\frac12,\notag
\end{align}
we propose a unified method called \emph{monotonic approximation} method to investigate this singular problem by constructing the following set with a positive and monotone decreasing sequence $\{\delta_n\}$ called \emph{safe distances},
\begin{align}
G^c(\delta_n)=\bigl\{x\in\mathbb{R}^N \mid 1\ge |\nabla\phi_0|^2\ge 1-\delta_n\bigr\},\quad \lim_{\delta_n\to0}G^c(\delta_n)=G^c(0),\notag
\end{align}
where $\phi_0$ is the minimizer of \eqref{1.5}. We prove that $G^c(0)=\{x\in\mathbb{R}^N \mid |\nabla\phi_0|^2=1\}$ is a Lebesgue null set. Therefore, $|\nabla\phi_0|$ is avoided being at the singularity boundary of the singular Born--Infeld type functional. On this basis, we first illustrate the mechanism of our monotonic approximation method using \eqref{1.4} as a concrete example in Section \ref{s3}. We then extend the results to general singular Born--Infeld type theories, and the concrete results are summarized in the following theorems. 

First of all, in Sections \ref{s3} and \ref{snew}, for the rational-function theory and general singular Born--Infeld type theories with truncation thresholds, the monotonic approximation method we proposed leads us to Lemmas \ref{lemma2.4} and \ref{thm1}, which concludes that the minimizers of \eqref{1.5} will never attain the singular boundary $|\nabla\phi|^2=1$, such that we obtain the following theorem:
\begin{theorem}\label{thm2}
For any singular Born--Infeld type theory with a truncation threshold $s\le\frac12$, the functional \eqref{1.5} always has a unique minimizer $\phi_0\in\mathcal{A}\backslash\{0\}$ with $|\nabla\phi_0|^2<1$.
\end{theorem}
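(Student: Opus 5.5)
The plan has three stages: existence and uniqueness by the direct method, nontriviality by a perturbation of zero, and then the claim that the gradient stays off the singular boundary via the monotonic approximation with safe distances $\delta_n$.

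\textbf{Existence and uniqueness.} I assume the standing hypotheses on a singular Born--Infeld type theory: $\mathcal{L}$ is convex and increasing on $[0,\frac12)$, satisfies the normalization \eqref{1.3}, and $\mathcal{L}(s)\ge s$. I also assume $\rho$ lies in the dual of $\mathcal{A}$; for instance $\rho\in L^{m}$ with $m\in[1,2_*]$ gives $|\langle\rho,\phi\rangle|\le C\|\phi\|_{D^{1,2}}$. Then $I$ is bounded below and coercive on $\mathcal{A}$:
\begin{equation*}
I(\phi)\ge\tfrac12\|\phi\|^2_{D^{1,2}}-C\|\phi\|_{D^{1,2}}.
\end{equation*}
Take a minimizing sequence $\phi_k$. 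It is bounded in $D^{1,2}$ and uniformly Lipschitz with constant at most $1$. Passing to a subsequence, $\phi_k\rightharpoonup\phi_0$ weakly in $D^{1,2}$ and locally uniformly by Arzel\`a--Ascoli. The set $\{|\nabla\phi|\le1\}$ is convex and closed, hence weakly closed, so $\phi_0\in\mathcal{A}$. The map $\phi\mapsto\int\mathcal{L}(\frac12|\nabla\phi|^2)$, with $\mathcal{L}=+\infty$ beyond $\frac12$, is convex and strongly lower semicontinuous by Fatou, hence weakly lower semicontinuous, and the linear term is continuous. So $\phi_0$ is a minimizer. Uniqueness follows from strict convexity of $\xi\mapsto\mathcal{L}(\frac12|\xi|^2)$: two minimizers must have equal gradients a.e., and then the decay condition forces them to coincide.

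\textbf{Nontriviality.} Assume $\rho\ne0$ and choose $\psi\in C_c^\infty$ with $\langle\rho,\psi\rangle>0$. Since $\mathcal{L}(s)=s+o(s)$, for small $t>0$
\begin{equation*}
I(t\psi)=\tfrac{t^2}{2}\|\psi\|^2_{D^{1,2}}(1+o(1))-t\langle\rho,\psi\rangle<0=I(0).
\end{equation*}
Hence $\phi_0\neq0$.

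\textbf{Avoiding the singular boundary (the main obstacle).} I want to show that $G^c(0)=\{|\nabla\phi_0|^2=1\}$ is a null set. This cannot come from finiteness of $I$ alone unless $\mathcal{L}(\frac12)=+\infty$ in a way that forces it, but here it essentially does: $I(\phi_0)\le I(0)=0<\infty$ gives $\int\mathcal{L}(\frac12|\nabla\phi_0|^2)<\infty$. Since $\mathcal{L}=+\infty$ on $G^c(0)$, that set has measure zero. I would refine this with the safe-distance sets $G^c(\delta_n)$, which decrease to $G^c(0)$. Because $\mathcal{L}(\frac12(1-\delta_n))\to\infty$, the Chebyshev-type bound
\begin{equation*}
|G^c(\delta_n)|\le\frac{\int\mathcal{L}}{\mathcal{L}(\frac12(1-\delta_n))}\to0
\end{equation*}
holds, and monotone convergence gives $|G^c(0)|=0$. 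This is exactly what Lemmas \ref{lemma2.4} and \ref{thm1} provide, and I would invoke them here.

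The delicate point is reading $|\nabla\phi_0|^2<1$ as holding almost everywhere, or everywhere after choosing a representative. A genuinely pointwise strict inequality would need the regularity results later in the paper. For this theorem I would state and prove only the a.e. version, using the monotone approximation as above.
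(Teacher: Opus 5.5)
Your argument is correct, and it is organized differently from the paper's in a way that matters. The paper never runs the direct method on $\mathcal{A}$ itself. Lemma~\ref{lemma2.1} proves continuity and weak lower semicontinuity only on $\mathcal{A}_{\rm effective}$, where $|\nabla\phi|^2\le 1-\epsilon$ keeps $\mathcal{L}$ bounded so dominated convergence applies. A minimizer is produced there and then identified with the minimizer on $\mathcal{A}$ through Lemma~\ref{thm1}. Step~1 of that lemma compares $\phi_0$ with $(1-t)\phi_0$ and uses monotonicity of $\frac1t[\mathcal{L}(z)-\mathcal{L}((1-t)^2z)]$ in $t$ and $z$ to bound $\mathcal{L}(\frac{1-\delta_n}{2})\,|G^c(\delta_n)|$ by $\langle\rho,\phi_0\rangle$.

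You instead extend $\mathcal{L}$ by $+\infty$ on $[\frac12,\infty)$, so the integrand is convex, nonnegative and lower semicontinuous. Fatou plus Mazur then give weak lower semicontinuity of $J$ on all of $D^{1,2}$. Existence on $\mathcal{A}$ therefore needs no a priori information about the minimizer. Avoidance of the singular set follows afterwards from $J(\phi_0)\le\langle\rho,\phi_0\rangle<\infty$ by a Chebyshev bound; this is the paper's Step~1 with the difference quotient stripped away.

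Your ordering avoids a circularity in the paper's route. Lemma~\ref{thm1} presupposes that a minimizer on $\mathcal{A}$ exists. It also yields only $|G^c(0)|=0$, not the uniform gap $|\nabla\phi_0|^2\le 1-\epsilon$ that would be needed to place $\phi_0$ in $\mathcal{A}_{\rm effective}$. For a fixed $\epsilon$, the minimizer over $\mathcal{A}_{\rm effective}$ need not minimize over $\mathcal{A}$.

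What the paper's difference-quotient formulation buys is reuse: letting $t\to0^+$ in the same inequality gives \eqref{1.8}. Finiteness of $J(\phi_0)$ alone does not imply \eqref{1.8}. For the logarithmic model, $\int-\ln(1-|\nabla\phi_0|^2)\,{\rm d}x<\infty$ does not force $|\nabla\phi_0|^2/(1-|\nabla\phi_0|^2)\in L^1$. Your restriction to the almost-everywhere statement is exactly what either argument delivers.

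Two minor remarks.
\begin{enumerate}
\item Your parenthetical claim that $\rho\in L^m$ with $m\in[1,2_*]$ gives $|\langle\rho,\phi\rangle|\le C\|\phi\|_{D^{1,2}}$ holds for $m=2_*$ but fails in general for $m<2_*$. On $\mathcal{A}$ one only gets $C\|\rho\|_{L^m}\|\nabla\phi\|_{L^2}^{\theta}$ with some $\theta<1$, via $\|\phi\|_{L^\infty}\le C\|\nabla\phi\|_{L^2}^{2/N}$ when $|\nabla\phi|\le1$. This still gives coercivity. However, weak continuity of the linear term along your minimizing sequence must then come from the locally uniform convergence rather than from $D^{1,2}$-duality.
\item You are right to make $\rho\neq0$ explicit for nontriviality; the paper uses it only implicitly.
\end{enumerate}
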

    
Furthermore, under the assumption that the charge distribution $\rho$ is radially symmetric, we prove rigorously that the minimizer is a weak solution, and that this solution is unique in the class of radial functions. 
\begin{theorem}\label{T1}
    If $\rho\in\mathcal{A}^*$ is radially distributed, then the minimizer of \eqref{1.5} is also a unique weak solution of the  system \eqref{1.6}.
\end{theorem}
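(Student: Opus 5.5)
The plan follows Bonheure--d'Avenia--Pomponio for the classical case. The inputs are Theorem \ref{thm2}, which gives a unique minimizer $\phi_0\in\mathcal{A}$, and the structural facts behind it: the integrability $b(\frac12|\nabla\phi_0|^2)|\nabla\phi_0|^2\in L^1(\mathbb{R}^N)$ and the variational inequality
\begin{align}
\int_{\mathbb{R}^N} b\left(\tfrac12|\nabla\phi_0|^2\right)\nabla\phi_0\cdot\nabla(\psi-\phi_0)\,{\rm d}x\ge\langle\rho,\psi-\phi_0\rangle\quad\text{for all }\psi\in\mathcal{A}.\notag
\end{align}
Both come from the monotonic approximation argument. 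The variational inequality should follow by convexity of $\mathcal{L}$: apply it to $\phi_0+t(\psi-\phi_0)$, divide by $t$, and let $t\to0^+$, using monotone convergence of difference quotients together with the $L^1$ bound.

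\textbf{Radial reduction.} Since $\rho$ is radial and $I$ is invariant under rotations, uniqueness in Theorem \ref{thm2} makes $\phi_0$ radial: $\phi_0(x)=u(r)$ with $|u'(r)|<1$. By the radial divergence theorem, a radial weak solution should satisfy
\begin{align}
b\left(\tfrac12u'(r)^2\right)u'(r)=-\frac{m(r)}{r^{N-1}},\qquad m(r):=\langle\rho,\chi_{B_r}\rangle/|\mathbb{S}^{N-1}|.\notag
\end{align}
This requires making sense of $m(r)$ for $\rho\in\mathcal{A}^*$ by testing against radial Lipschitz cut-offs. Because $h(t)=b(\frac12t^2)t$ is a strictly increasing bijection from $(-1,1)$ onto $\mathbb{R}$ (as $b\to+\infty$ at $\frac12$), this ODE uniquely defines a radial $\bar u$ with $|\bar u'|<1$ everywhere. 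I then show that $\phi_0=\bar u$. One route is to plug radial test functions $\psi=\phi_0\pm t\eta$, with $\eta$ radial and compactly supported, into the variational inequality; they remain admissible for small $t$ wherever $|u'|\le1-\delta$.

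\textbf{General test functions.} The main obstacle is passing from radial test functions, and from the region where $|\nabla\phi_0|$ stays away from $1$, to every $\psi\in C_c^\infty$. Here $\phi_0\pm t\psi$ need not lie in $\mathcal{A}$. I plan to use the explicit formula: since $\bar u'$ is given pointwise, $|\bar u'|<1$ holds everywhere, and on compact sets $m$ is bounded, so $|\bar u'|\le1-\epsilon_K$ on compact annuli. The delicate points are near $r=0$ and where $m$ is large. The radial weak formulation
\begin{align}
\int b\left(\tfrac12|\nabla\phi_0|^2\right)\nabla\phi_0\cdot\nabla\psi\,{\rm d}x=\langle\rho,\psi\rangle\notag
\end{align}
for arbitrary $\psi$ then follows by averaging $\psi$ over $SO(N)$. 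Both sides depend only on the spherical mean $\psi^\#$, because $\nabla\phi_0$ is radial and $\rho$ is invariant. Moreover $\psi^\#$ is radial, so the identity reduces to the radial case, which is exactly the ODE integrated against $(\psi^\#)'$.

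\textbf{Uniqueness.} Let $\phi_1,\phi_2$ be weak solutions. Test the difference of the two equations with $\phi_1-\phi_2$, which is admissible once both lie in $\mathcal{A}$ and the $L^1$ integrability makes the integrals finite. This gives
\begin{align}
\int\left(b(s_1)\nabla\phi_1-b(s_2)\nabla\phi_2\right)\cdot\nabla(\phi_1-\phi_2)\,{\rm d}x=0,\notag
\end{align}
where $s_i=\frac12|\nabla\phi_i|^2$. Strict monotonicity of $\xi\mapsto b(\frac12|\xi|^2)\xi$, which is the gradient of the strictly convex $\mathcal{L}(\frac12|\xi|^2)$, forces $\nabla\phi_1=\nabla\phi_2$ almost everywhere. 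The decay condition at infinity then yields $\phi_1=\phi_2$.
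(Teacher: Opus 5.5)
Your route is genuinely different from the paper's. The paper never writes down a Gauss-law ODE. Instead it:
\begin{itemize}
\item differentiates $I$ at $\phi_0$ along the truncated radial directions $\psi_{k_n}(r)=-\int_r^\infty\psi'(z)[1-\chi_{\mathcal{K}_{k_n}}(z)]\,{\rm d}z$;
\item lets $k_n\to\infty$, using that $\{|\phi_0'|=1\}$ is null;
\item extends to radial $\psi\in\mathcal{A}$ by mollification and Lemma \ref{lemma4.5};
\item gets uniqueness from Lemma \ref{lemma 3.5} (weak solution $\Rightarrow$ minimizer).
\end{itemize}
Your $SO(N)$-averaging and your strict-monotonicity uniqueness argument are sound. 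The averaging in fact makes explicit the passage to non-radial $\psi$, which the paper's proof leaves implicit.

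\textbf{The gap: identifying $\phi_0=\bar u$.} This is the step that carries the weight of the theorem, and as written it does not work. For a smooth radial compactly supported $\eta$, at least one of $\phi_0\pm t\eta$ leaves $\mathcal{A}$ for every $t>0$ as soon as $\operatorname{ess\,sup}\{|u'(r)| : |\eta'(r)|\ge c\}=1$ for some $c>0$. Nothing excludes this a priori: $u'$ is only $L^\infty$, so $\{|u'|\le1-\delta\}$ is merely measurable and need not contain an interval on which to place ${\rm supp}\,\eta'$. Being ``admissible wherever $|u'|\le1-\delta$'' is no substitute for the global constraint.

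The missing device is exactly the paper's truncation. For $g\in C_c^\infty(0,\infty)$ set $\eta_\delta(r)=-\int_r^\infty g(s)\chi_{\{|u'|\le1-\delta\}}(s)\,{\rm d}s$. Then $\phi_0\pm t\eta_\delta\in\mathcal{A}$ for $0<t<\delta/\|g\|_\infty$, and minimality gives an equality. With your flux this reads $\bigl(b(\tfrac12u'^2)u'r^{N-1}+m\bigr)\chi_{\{|u'|\le1-\delta\}}=0$ a.e. Now let $\delta\to0$ and use $|\{|u'|=1\}|=0$ to get $u'=\bar u'$ a.e. You must state this null-set fact explicitly; it follows from $J(\phi_0)\le\langle\rho,\phi_0\rangle<\infty$ and $\mathcal{L}(s)\to+\infty$ as $s\to\frac12^-$.

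\textbf{A shorter route.} Your own construction makes the variational inequality unnecessary here:
\begin{itemize}
\item $\bar u$ is $1$-Lipschitz, and $|\bar u'|\le\min\{1,|m|r^{1-N}\}$ because $b\ge b(0)=1$, so $\bar u\in\mathcal{A}$.
\item $J(\bar u)\le\frac12\int b(\tfrac12|\nabla\bar u|^2)|\nabla\bar u|^2\,{\rm d}x<\infty$ by \eqref{b2.2}.
\item $\bar u$ satisfies \eqref{2.17} by construction: for radial $\psi$ this is the defining identity of $m$, and for general $\psi$ it is your averaging.
\end{itemize}
The convexity argument of Lemma \ref{lemma 3.5} then shows $\bar u$ minimizes $I$, so $\bar u=\phi_0$ by uniqueness of the minimizer.

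\textbf{Two smaller corrections.} First, $m$ cannot be defined as $\langle\rho,\chi_{B_r}\rangle$. It must be the a.e.-defined function representing $\rho$ on radial functions, $\langle\rho,\psi\rangle=-|\mathbb{S}^{N-1}|\int_0^\infty m\,\psi'\,{\rm d}r$. You can obtain it by Riesz representation, or as a.e. limits of $\rho$ tested against radial Lipschitz cut-offs.

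Second, your claim that $m$ is bounded on compact sets, hence $|\bar u'|\le1-\epsilon_K$ on compact annuli, is false for general radial $\rho\in\mathcal{A}^*$. Take $\rho=-{\rm div}\bigl(f(|x|)\tfrac{x}{|x|}\bigr)$ with $f\in L^2(r^{N-1}{\rm d}r)$ and $f(r)\to+\infty$ as $r\to r_1>0$ (e.g. $f(r)=|r-r_1|^{-1/3}$ near $r_1$). Then $m=-fr^{N-1}$ and $|\bar u'(r)|\to1$ as $r\to r_1$. Fortunately the claim is not needed: integrability of $b(\tfrac12\bar u'^2)\bar u'\psi'r^{N-1}=-m\psi'$ follows from the representation of $m$ (or from \eqref{1.8}).
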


Before we give more results, we emphasize a fundamental conceptual and methodological distinction here, particularly in comparison with the classical Born--Infeld setting studied in \cite{Bonheure2016}. The Lagrangian of the classical Born--Infeld model, $1-\sqrt{1-|\nabla\phi|^2}$, remains bounded as $|\nabla\phi|^2\to 1^-$; consequently, its energy functional does not exhibit singular behavior near the gradient constraint. This lack of singularity allows for direct variational arguments and truncation techniques to analyze the behavior of minimizers at the boundary. Bonheure, d'Avenia and Pomponio \cite{Bonheure2016} proved that the minimizer is a weak solution under radial symmetry of $\rho$, while Haarala \cite{Haarala2020} achieved the same conclusion without symmetry but required the strong assumption $\rho\in L^p(\mathbb{R}^N)$ ($p>N$) to obtain a uniform strict spacelikeness estimate.

In sharp contrast, the Born--Infeld type functionals considered in this paper are fundamentally singular: by the definition in \eqref{1.5}, their Lagrangian densities blow up as $s\to(1/2)^-$, i.e., as $|\nabla\phi|^2\to 1^-$. This strong singularity renders the methods of \cite{Bonheure2016} inapplicable for analyzing whether the gradient of the minimizer touches the singular boundary. Moreover,  the lack of continuity or weak lower semicontinuity of the functional \eqref{1.5} makes it difficult to analyze the existence of the minimizers. To overcome these difficulties, by using the novel monotonic approximation method we mentioned before, our Lemma \ref{thm1} establishes, for arbitrary $\rho\in\mathcal{A}^*$ and for general singular Born--Infeld type theories, that the singular set $\{|\nabla\phi_0|^2=1\}$ is a Lebesgue null set. This qualitative result is purely variational and does not require symmetry. Yet, a null-set alone does not suffice to derive the Euler--Lagrange equation in the non-radial case; for that, we impose radial symmetry in Theorem \ref{T1}, which allows us to pass from the variational inequality to the weak formulation via one-dimensional test functions. Thus, the conclusion $|G^c(0)|=0$ and Theorem \ref{T1} are complementary:  $|G^c(0)|=0$ is a universal structural property of the minimizer, obtained by a new method tailored to strong singularity, while Theorem \ref{T1} provides the functional equation under the physically natural radial setting.

In addition to the aforementioned theorems, we have some additional conclusions. First, if we impose more conditions on $\rho$, we can obtain the regularity of the weak solution of \eqref{1.6}.
\begin{theorem}[$C^1(\mathbb{R}^N,\mathbb{R})$ regularity of the weak solution]\label{T2}
    Let $\rho\in\mathcal{A}^*$ be radially distributed. If $\rho\in L^d(\mathbb{R}^N)\cap L^e(B_\xi(0))$ with $d\ge1$, $e\ge N$ and $\xi>0$, then the weak solution $\phi_0$ of \eqref{1.6} is $C^1(\mathbb{R}^N,\mathbb{R})$. Specifically, the weak solution possesses the following properties:
    \begin{enumerate}
        \item $|\phi_0'(0)|=0$.
        \item $\phi_0$ is strictly spacelike: $|\phi_0'(0)|\le1-\epsilon$ for a small perturbation parameter $\epsilon>0$.
    \end{enumerate}
\end{theorem}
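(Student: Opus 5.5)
Since $\rho$ is radial and $\phi_0$ is the unique minimizer (Theorem \ref{thm2}), I will first use rotation invariance of \eqref{1.5} and uniqueness to conclude that $\phi_0(x)=\phi_0(r)$, $r=|x|$, is radial. By Theorem \ref{T1}, $\phi_0$ is a weak solution of \eqref{1.6}. For radial test functions the weak formulation becomes one-dimensional, and it integrates to the flux identity
\begin{align}
 b\left(\tfrac12|\phi_0'(r)|^2\right)\phi_0'(r)=-\frac{1}{r^{N-1}}\int_0^r\rho(t)t^{N-1}\,{\rm d}t=:-\frac{m(r)}{r^{N-1}},\qquad \text{a.e. } r>0.\notag
\end{align}
The map $h(t)=b(\frac12t^2)t$ is odd. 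It is strictly increasing on $(-1,1)$, because $b>0$ and $b'\ge0$ (convexity of $\mathcal{L}$, as assumed for the class of theories in Section \ref{snew}). It tends to $\pm\infty$ as $t\to\pm1$, since $b\to+\infty$. Hence $h$ is a homeomorphism of $(-1,1)$ onto $\mathbb{R}$, and
\begin{align}
\phi_0'(r)=-h^{-1}\bigl(m(r)r^{1-N}\bigr)\quad \text{for a.e. } r.\notag
\end{align}
The map $r\mapsto m(r)$ is continuous because $\rho\in L^1_{\rm loc}$; this uses $\rho\in L^d$ away from the origin and $L^e$ near it. So the right-hand side is continuous on $(0,\infty)$. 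Redefining $\phi_0'$ on a null set then gives $\phi_0\in C^1(\mathbb{R}^N\setminus\{0\})$, and $|\nabla\phi_0|<1$ at every point of $\mathbb{R}^N\setminus\{0\}$.

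The main obstacle is the origin, and this is where $e\ge N$ enters. By H\"older on $B_r$ with $r<\xi$,
\begin{align}
|m(r)|\le \int_{B_r}|\rho|\,{\rm d}x/|S^{N-1}|\le C\|\rho\|_{L^e(B_\xi)}\,r^{N(1-1/e)},\notag
\end{align}
and therefore
\begin{align}
|m(r)|r^{1-N}\le C\|\rho\|_{L^e(B_\xi)}r^{1-N/e}.\notag
\end{align}
If $e>N$, this tends to $0$ as $r\to0$. Since $h^{-1}$ is continuous with $h^{-1}(0)=0$, it follows that $\phi_0'(r)\to0$. For the borderline case $e=N$, I would instead use absolute continuity of the integral: $\|\rho\|_{L^N(B_r)}\to0$ as $r\to0$, so the same bound still gives a limit of $0$. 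This yields $\phi_0'(0)=0$, which is property 1. Because $\phi_0'(r)\to0$ as $r\to0$ and the gradient vector is $\phi_0'(r)x/r$, the gradient extends continuously to the origin with value $0$. Hence $\phi_0\in C^1(\mathbb{R}^N)$.

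For the strict spacelike bound, I will produce a uniform $\epsilon$ from a uniform bound $M$ on $|m(r)|r^{1-N}$, since then $|\phi_0'|\le h^{-1}(M)=1-\epsilon<1$. On $(0,\xi]$ the H\"older bound above gives $M_1=C\|\rho\|_{L^e(B_\xi)}\xi^{1-N/e}$. For $r\ge\xi$, H\"older with $\rho\in L^d$ gives $|m(r)|\le C\|\rho\|_{L^d}r^{N(1-1/d)}$, so
\begin{align}
|m(r)|r^{1-N}\le C\|\rho\|_{L^d}r^{1-N/d}.\notag
\end{align}
If $d\le N$, this is at most $C\|\rho\|_{L^d}\xi^{1-N/d}$. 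If $d>N$, I would split instead: bound $\int_{B_\xi}|\rho|$ by the $L^e$ norm, and bound the annular contribution via the $L^d$ norm on a bounded region. In that case I would fall back on continuity of $\phi_0'$ together with $|\phi_0'|<1$, plus decay of $|m(r)|r^{1-N}$ as $r\to\infty$, so that the supremum is attained and is $<1$. The resulting $\epsilon=1-h^{-1}(\max(M_1,M_2))$ depends only on $N$, $\xi$ and the norms of $\rho$, as announced in the abstract. The delicate points are this large-$r$ control when $d>N$ and the borderline case $e=N$.
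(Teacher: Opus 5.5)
Your proposal is correct and follows essentially the paper's route. The common steps are the flux identity $b\bigl(\tfrac12(\phi_0')^2\bigr)\phi_0'\,r^{N-1}=-\int_0^r\rho\,t^{N-1}\,{\rm d}t$, inversion of the increasing map $t\mapsto b(\tfrac12t^2)t$, and the H\"older bound $C r^{1-N/e}\|\rho\|_{L^e(B_\xi(0))}$ at the origin. The paper pins down the value at the origin with the test function $(R-|x|)_+$; you get it directly because radial test functions need not vanish at $0$. Your absolute-continuity argument ($\|\rho\|_{L^N(B_r)}\to0$) also covers the borderline case $e=N$ allowed in the statement, which the paper's proof (``since $e>N$'') does not handle.

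The one loose end is the global $\epsilon$ when $d>N$. There the decay of $|m(r)|r^{1-N}$ is asserted rather than proved; it does follow from $\int_0^\infty|\phi_0'|^2r^{N-1}\,{\rm d}r<\infty$ together with the H\"older modulus of continuity of $m$. No global bound is actually required, though. Item 2 is immediate from item 1, and strict spacelikeness on compact sets already follows from your continuity of $\phi_0'$ on $[0,\infty)$ together with $|\phi_0'|<1$ pointwise.
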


Second, basing on the assumption for $\rho$ in Theorem \ref{T2}, we can obtain the classical regularity of $\phi_0$ if $\rho\in C^0$.
\begin{theorem}[Classical regularity of weak solution]\label{T4}
Let $\rho\in\mathcal{A}^*$ be radially distributed. Assume additionally that $\rho\in C^0(0,\infty)$. Then the weak solution $\phi_0$ of \eqref{1.6} satisfies $\phi_0\in C^2(0,\infty)$ and fulfills the Euler--Lagrange equation
    \begin{align}
        -\frac{1}{r^{N-1}}\frac{{\rm d}}{{\rm d}r}
        \Bigl(b\bigl(\tfrac12\phi_0'(r)^2\bigr)\,\phi_0'(r)\,r^{N-1}\Bigr)
        =\rho(r),\quad\forall\,r>0.\label{1.10}
    \end{align}
If $\rho$ is continuous at the origin, then $\phi_0\in C^2(\mathbb{R}^N)$ and \eqref{1.10} holds for all $r\ge 0$.
\end{theorem}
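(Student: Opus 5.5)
We will work entirely with the radial ODE. By Theorem \ref{T1} and Theorem \ref{T2}, $\phi_0$ is the unique weak solution, it is radial, $\phi_0\in C^1$, and it satisfies the strict spacelike bound $|\phi_0'(r)|\le 1-\epsilon$. Testing the weak formulation with radial test functions $\psi(|x|)$, $\psi\in C_c^\infty(0,\infty)$, gives the distributional identity $-\bigl(r^{N-1}b(\tfrac12\phi_0'^2)\phi_0'\bigr)'=r^{N-1}\rho$ on $(0,\infty)$. We will first integrate this to obtain the first integral
\begin{align}
b\bigl(\tfrac12\phi_0'(r)^2\bigr)\phi_0'(r)=-\frac{1}{r^{N-1}}\int_0^r t^{N-1}\rho(t)\,{\rm d}t,\qquad r>0.\notag
\end{align}
The integration constant is fixed to zero by letting $r\to0^+$: the left side stays bounded because $|\phi_0'|\le1-\epsilon$ and $b$ is continuous on $[0,\tfrac12(1-\epsilon)^2]$, whereas a nonzero constant $C$ would produce a term $C r^{1-N}$ that blows up. This argument also recovers $\phi_0'(0)=0$ from Theorem \ref{T2}.

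Next, set $F(r):=-r^{1-N}\int_0^r t^{N-1}\rho(t)\,{\rm d}t$ and $g(\sigma):=b(\tfrac12\sigma^2)\sigma$. Since $\rho\in C^0(0,\infty)$ and is locally integrable (from $\rho\in L^d\cap L^e(B_\xi)$ as in Theorem \ref{T2}), $F\in C^1(0,\infty)$, with $F'(r)=-\rho(r)-\tfrac{N-1}{r}F(r)$. On $(-1,1)$ we have $g'(\sigma)=b(\tfrac12\sigma^2)+b'(\tfrac12\sigma^2)\sigma^2$. This is positive: $b>0$ near $0$ by \eqref{1.3}, and $g$ is strictly increasing, which is exactly the ellipticity coming from strict convexity of $\sigma\mapsto\mathcal{L}(\tfrac12\sigma^2)$. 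That convexity is the structural assumption underlying the uniqueness in Theorem \ref{thm2}, and I will invoke it, together with $\mathcal{L}\in C^2$, from the standing hypotheses of Section \ref{snew}. Hence $g$ is a $C^1$ diffeomorphism of $(-1,1)$ onto its image, with $C^1$ inverse. Writing $\phi_0'=g^{-1}(F)$ then gives $\phi_0'\in C^1(0,\infty)$, so $\phi_0\in C^2(0,\infty)$. Differentiating $r^{N-1}g(\phi_0')=r^{N-1}F$ classically yields \eqref{1.10} pointwise for every $r>0$.

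At the origin, if $\rho$ is continuous at $0$, then $\int_0^r t^{N-1}\rho\,{\rm d}t=\tfrac{r^N}{N}\rho(0)+o(r^N)$, so $F(r)=-\tfrac{r}{N}\rho(0)+o(r)$ and $F'(r)\to-\rho(0)/N$. Consequently $\phi_0'(r)=g^{-1}(F(r))$ satisfies $\phi_0'(r)/r\to-\rho(0)/(N g'(0))=-\rho(0)/N$, using $g'(0)=b(0)=1$, and also $\phi_0''(r)\to-\rho(0)/N$. We then use the standard fact that for a radial function $u(x)=\varphi(|x|)$ with $\varphi'(0)=0$, the Hessian is
\begin{align}
D^2u=\varphi''\,\hat x\otimes\hat x+\frac{\varphi'}{r}\bigl(I-\hat x\otimes\hat x\bigr),\qquad \hat x=x/|x|.\notag
\end{align}
Both coefficients converge to the same limit $-\rho(0)/N$, so $D^2\phi_0$ extends continuously to $0$ with value $-\tfrac{\rho(0)}{N}I$. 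This shows $\phi_0\in C^2(\mathbb{R}^N)$. Equation \eqref{1.10} at $r=0$ is read as the limit of its left-hand side, which equals $g'(0)\bigl(\phi_0''+\tfrac{N-1}{r}\phi_0'\bigr)\to -\rho(0)\cdot\tfrac{N}{N}$ up to sign, i.e.\ it equals $\rho(0)$.

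The main obstacle is the invertibility of $g$ with a $C^1$ inverse on the range of $\phi_0'$. This needs $g'>0$ on $[-(1-\epsilon),1-\epsilon]$, which is where the strict spacelike estimate of Theorem \ref{T2} is essential: it keeps us away from the singularity of $b$. If only monotonicity of $g$ (rather than $g'>0$) is available from the assumptions, I would first try to deduce $g'>0$ from convexity of $\mathcal{L}$ together with $b>0$.
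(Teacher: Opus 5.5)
Your proposal is correct and follows essentially the paper's route. You use the first integral $r^{N-1}b\bigl(\tfrac12(\phi_0')^2\bigr)\phi_0'=-\int_0^r\rho\,t^{N-1}\,{\rm d}t$, invert $g(\sigma)=b(\tfrac12\sigma^2)\sigma$, differentiate classically for $r>0$, and expand at the origin. Your hedging about the sign of $g'$ is unnecessary, since $g'=b+b'\sigma^2\ge b>0$ follows at once from \eqref{2.1}. Your treatment of the origin is more complete than the paper's: the paper only computes the difference quotient $\phi_0''(0)=-\rho(0)/N$ and then asserts continuity. You instead check that both $\phi_0''(r)$ and $\phi_0'(r)/r$ tend to $-\rho(0)/N$, and use the radial Hessian formula to obtain $\phi_0\in C^2(\mathbb{R}^N)$.
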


Lastly, under the premise that the minimizers of \eqref{1.5} never attain the singular boundary $s=\frac12$, the perturbation parameter $\epsilon$ defined in Theorem \ref{T2} quantitatively describes spacelikeness of $|\phi'_0|$ by some physical quantities.

\begin{theorem}[Uniform strict spacelikeness estimate]\label{T3}
    The small perturbation parameter $\epsilon>0$ given  by Theorem \ref{T2} depends explicitly on the spatial dimension $N$, charge density $\rho\in{L^e(B_\xi(0))}\cap L^d(\mathbb{R}^N)$ and spatial coordinates $[r_0,R]$.
\end{theorem}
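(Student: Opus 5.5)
The plan is to reduce the problem to the radial ODE and read off $\epsilon$ from an explicit formula for $\phi_0'$. By Theorem \ref{T1}, $\phi_0$ is radial and a weak solution of \eqref{1.6}. Testing with radial functions, we get for a.e.\ $r>0$ the first integral
\begin{align}
b\bigl(\tfrac12\phi_0'(r)^2\bigr)\,\phi_0'(r)=-\frac{1}{r^{N-1}}\int_0^r\rho(t)\,t^{N-1}\,{\rm d}t=:-\frac{m(r)}{r^{N-1}}.\notag
\end{align}
The map $\Phi(\tau)=b(\tfrac12\tau^2)\tau$ is odd and strictly increasing on $(-1,1)$, since $b>0$ and $\mathcal{L}$ is convex. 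It also tends to $\pm\infty$ as $\tau\to\pm1$, because $b\to+\infty$. Hence $\phi_0'(r)=-\Phi^{-1}(m(r)/r^{N-1})$. Consequently
\begin{align}
|\phi_0'(r)|\le 1-\epsilon \iff \frac{|m(r)|}{r^{N-1}}\le \Phi(1-\epsilon).\notag
\end{align}
So it suffices to produce an explicit bound $M$ on $\sup_{r}|m(r)|/r^{N-1}$ over the relevant range, and then to set $\epsilon:=1-\Phi^{-1}(M)>0$. By construction this $\epsilon$ depends only on $M$ and on the fixed Lagrangian $\mathcal{L}$.

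To bound $M$, split $r$ into three regimes: $r\in(0,r_0]$ with $r_0\le\xi$, $r\in[r_0,R]$, and $r\ge R$.

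\textbf{Small $r$.} Use H\"older with $\rho\in L^e(B_\xi(0))$:
\begin{align}
|m(r)|\le |S^{N-1}|^{-1}\|\rho\|_{L^e(B_r)}\,|B_r|^{1-1/e}\le C_N\|\rho\|_{L^e(B_\xi)}\,r^{N(1-1/e)}.\notag
\end{align}
Thus $|m(r)|/r^{N-1}\le C_N\|\rho\|_{L^e(B_\xi)}\, r^{1-N/e}$. Since $e\ge N$, this is bounded by $C_N\|\rho\|_{L^e(B_\xi)}r_0^{1-N/e}$. In particular it tends to $0$ as $r\to0$ when $e>N$, which is consistent with $\phi_0'(0)=0$ in Theorem \ref{T2}. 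When $e=N$ it stays bounded by $C_N\|\rho\|_{L^N(B_\xi)}$.

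\textbf{Intermediate and large $r$.} Use $\rho\in L^d(\mathbb{R}^N)$, again by H\"older:
\begin{align}
|m(r)|\le C_N\|\rho\|_{L^d}\,r^{N(1-1/d)},\quad\text{so}\quad \frac{|m(r)|}{r^{N-1}}\le C_N\|\rho\|_{L^d}\,r^{1-N/d}.\notag
\end{align}
On $[r_0,R]$ this is at most $C_N\|\rho\|_{L^d}\max\{r_0^{1-N/d},R^{1-N/d}\}$.

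Taking the maximum of these bounds gives
\begin{align}
M=M\bigl(N,\|\rho\|_{L^e(B_\xi)},\|\rho\|_{L^d},r_0,R\bigr),\notag
\end{align}
and $\epsilon=1-\Phi^{-1}(M)$ is then explicit in exactly the quantities listed in Theorem \ref{T3}.

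The main obstacle is the exterior region $r\ge R$ when $d>N$, because there the power $r^{1-N/d}$ grows. I would first try the crude bound $|m(r)|\le C\|\rho\|_{L^1}$, but $L^1$ is not assumed unless $d=1$. The fallback is to use $\rho\in\mathcal{A}^*$ together with the fact that $\phi_0\in D^{1,2}$ is radial: then $|\phi_0'(r)|\to0$ along a sequence, and by monotonicity of $\Phi$ the quantity $|m(r)|/r^{N-1}$ is small for large $r$. This is the reason the estimate is stated only on the compact range $[r_0,R]$ (together with the origin via Theorem \ref{T2}). I would present $\epsilon$ as explicit on $[r_0,R]$ and note that it is uniform there.
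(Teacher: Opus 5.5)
Your proposal is correct and follows essentially the paper's route: the first integral $b(\tfrac12\phi_0'^2)\phi_0'=-r^{1-N}\int_0^r\rho\,t^{N-1}\,{\rm d}t$ (the paper's \eqref{4.11}), inversion of the increasing map $\mathbb{B}=\Phi$, a H\"older bound $|M|\le|M_0|$, and then $\epsilon=1-\mathbb{B}^{-1}(|M_0|)$. Your pointwise-in-$r$ H\"older estimate ($L^e$ near the origin, $L^d$ on $[r_0,R]$) is slightly sharper than the paper's single bound $|M_0|=N^{1/e-1}r_0^{1-N}\|\rho\|_{L^e(B_\xi(0))}R^{N(1-1/e)}$, which tacitly needs $R\le\xi$ and degenerates as $r_0\to0^+$; your speculative argument for $r\ge R$ is unnecessary, and smallness ``along a sequence'' would not give smallness for all large $r$, so simply drop it.
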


The rest of the paper is organized as follows. Section \ref{s3} presents the structural assumptions on $\mathcal{L}(s)$ and $\mathcal{A}$, illustrates the monotonic approximation method on the rational-function theory, and analyzes the existence and uniqueness of the minimizer of \eqref{rational functional}. Section \ref{snew} extends these results to general singular Born--Infeld type theories with truncation thresholds and proves that any weak solution of \eqref{1.6} coincides with the minimizer of \eqref{1.5}. Section \ref{s4} treats the radial case: we prove that the minimizer of \eqref{1.5} is the weak solution of \eqref{1.6}, establish its $C^1$ and $C^2$ regularity, and provide a quantitative estimate for the strict spacelikeness of the minimizer via the perturbation parameter $\epsilon$. Finally, Section \ref{s5} synthesizes the main results and discusses their broader variational and physical context.

\section{The monotonic approximation method: A case study of the rational-function theory}\label{s3}

In this section, for the rational-function theory \eqref{1.4}, we demonstrate how to use the monotonic approximation method to prove that the minimizer does not attain the singular boundary, and then establish the existence and uniqueness of the minimizer.

We list some of the basic properties that are necessary and useful for this paper. 

First, it is necessary to summarize and explain the properties of the Born--Infeld type Lagrangians. We emphasize that,  these properties not only cover the properties of rational-function theory, but also will play a crucial role in the derivations in the subsequent sections for singular Born--Infeld type theories. 
\begin{proposition}\label{proposition2.1}
From the perspective of theoretical physics, any Born--Infeld type theory $\mathcal{L}(s)$ must satisfy the following signatures:
    \begin{enumerate}
        \item Normalization conditions \eqref{1.3}.
        \item According to \cite{Plebanski1968,Schellstede2016,Russo2024}, any Born--Infeld (type) theory belongs to the Pleba\'nski class of nonlinear electrodynamics and satisfies the causality conditions for all allowed background fields. Conditions (62)-(64) of \cite{Schellstede2016} give us
        \begin{align}
            b(s)=\mathcal{L}'(s)>0,\quad \mathcal{L}''(s)>0.\label{2.1}
        \end{align}
        \item By the Stone--Weierstrass theorem \cite{Stone1948,Yosida1995}, polynomial functions are dense in the space of continuous nonlinear Lagrangians. Consequently, any Born--Infeld type theory can be approximated, in an appropriate topology, by a sequence of polynomial-generated theories:
\begin{align}
    \mathcal{L}(s)\sim\sum_{n=1}^\infty \zeta_n s^n,\quad\zeta_n>0,\quad\zeta_n=\text{constant},\quad\zeta_1=1.\label{stone}
\end{align}
    \end{enumerate}
\end{proposition}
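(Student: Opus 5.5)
The three items of Proposition \ref{proposition2.1} have different status, so I would justify each separately. Items 1 and 2 are structural requirements drawn from the physics literature: item 1 is built into the definition, and item 2 follows from cited causality conditions. Only item 3 needs a genuine mathematical argument, and even there only in a restricted sense.

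\textbf{Item 1.} The normalization \eqref{1.3} is part of the definition of a Born--Infeld type theory in this paper. It is the requirement that the weak-field limit reproduce Maxwell's Lagrangian $\mathcal{L}_{\rm Maxwell}(s)=s$. Writing $\mathcal{L}(s)=\mathcal{L}(0)+\mathcal{L}'(0)s+o(s)$ as $s\to0$ and matching with $s$ gives $\mathcal{L}(0)=0$ and $\mathcal{L}'(0)=1$. I would also verify it directly on the examples: for \eqref{1.4}, $\mathcal{L}_{\rm rational}(0)=0$ and $b_{\rm rational}(0)=1$; for $\mathcal{L}_{\rm exp}$, $\mathcal{L}_{\rm exp}(0)=0$ and $\mathcal{L}_{\rm exp}'(0)=1$.

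\textbf{Item 2.} I would invoke the Pleba\'nski-class causality conditions (62)--(64) of \cite{Schellstede2016}. These require, for all admissible backgrounds, that the energy density be positive (i.e.\ $\mathcal{L}'>0$) and that the field equations be hyperbolic without superluminal propagation (i.e.\ $\mathcal{L}''>0$). Specialized to the purely electric configuration $s=\frac12|\nabla\phi|^2$, these give \eqref{2.1}. As a consistency check, $b_{\rm rational}(s)=(1-2s)^{-2}>0$ and $\mathcal{L}''_{\rm rational}(s)=4(1-2s)^{-3}>0$ on $[0,\frac12)$. Mathematically, \eqref{2.1} says that $\mathcal{L}$ is strictly increasing and strictly convex, hence $J$ is strictly convex. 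This is exactly what the later uniqueness arguments use.

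\textbf{Item 3.} I would state precisely what Stone--Weierstrass gives. For each compact $[0,\frac12-\delta]$, $\delta>0$, the polynomials are uniformly dense in $C[0,\frac12-\delta]$, so $\mathcal{L}$ is a uniform limit of polynomials there; by \eqref{1.3} one may normalize $\zeta_0=0$ and $\zeta_1=1$. The positivity $\zeta_n>0$ needs more than density. I would obtain it when $\mathcal{L}$ is absolutely monotone on $[0,\frac12)$, meaning all derivatives are nonnegative. In that case Bernstein's theorem shows the Taylor series at $0$ converges on $[0,\frac12)$ with nonnegative coefficients. The rational model is of this type: $\frac{s}{1-2s}=\sum_{n\ge1}2^{n-1}s^n$. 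So \eqref{stone} should be read as an approximation on compacts of $[0,\frac12)$, consistent with the safe distances $\delta_n$. Positivity of the coefficients should be taken as an additional structural hypothesis rather than something derived from Stone--Weierstrass.

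\textbf{Main obstacle.} The hardest point is item 3. Density alone yields neither convergence up to the singular endpoint $s=\frac12$ nor positive coefficients. I would handle this by restricting to compacts $[0,\frac12-\delta]$ and, where positivity is actually used later, by adding absolute monotonicity (or the Bernstein property) as an explicit assumption.
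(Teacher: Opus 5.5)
\textbf{Items 1 and 2.} The paper gives no argument for this proposition beyond the references embedded in its statement. On items 1 and 2 you do exactly what it does: \eqref{1.3} is definitional, and \eqref{2.1} is imported from the cited causality conditions rather than proved.

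\textbf{Item 3.} Here you depart from the paper, and your departure is the correct one. Stone--Weierstrass only gives uniform approximation on compacts $[0,\frac12-\delta]$ by polynomials whose coefficients change along the sequence. It says nothing about their signs and produces no single series $\sum\zeta_n s^n$. Your reading (approximation on compacts, with positivity imposed through absolute monotonicity and Bernstein's theorem) is what can actually be asserted.

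Your extra hypothesis is in fact necessary for any version of \eqref{stone} with nonnegative coefficients. Suppose $q_n(s)=\sum_k c^{(n)}_k s^k$ with $c^{(n)}_k\ge0$ converge uniformly to $\mathcal{L}$ on $[0,a]$. Then $|q_n(z)|\le q_n(a)\le M$ for $|z|\le a$. By Montel, a subsequence converges on $\{|z|<a\}$ to an analytic function with nonnegative Taylor coefficients that equals $\mathcal{L}$ on $[0,a)$. So $\mathcal{L}$ must be absolutely monotone there.

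This condition genuinely fails for some theories that satisfy everything else. Take $\mathcal{L}(s)=s+\frac{s^2}{1-2s}-3s^3$. It has $\mathcal{L}(0)=0$, $\mathcal{L}'(0)=1$, and blows up as $s\to\frac12^-$. Moreover $\mathcal{L}''(s)=2(1-2s)^{-3}-18s>0$ on $[0,\frac12)$: with $u=1-2s$ this reads $2>9u^3(1-u)$, and $u^3(1-u)\le\frac{27}{256}$. Hence $\mathcal{L}'\ge1>0$. Yet $\mathcal{L}(s)=s+s^2-s^3+4s^4+\cdots$, so $\zeta_3=-1$, and no positive-coefficient approximation exists on any $[0,a]$.

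\textbf{Two small refinements.} First, Bernstein only yields $\zeta_n\ge0$; with $-2s^3$ in place of $-3s^3$ one gets $\zeta_3=0$. So strict positivity is a further assumption. Second, $\zeta_1=1$ is not automatic for Stone--Weierstrass approximants of $\mathcal{L}$ itself. Instead, approximate $\mathcal{L}'$ uniformly by polynomials $r_n$, shift them so that $r_n(0)=1$, and use $\int_0^s r_n$.

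\textbf{Downstream uses.} The paper uses \eqref{stone} through \eqref{2.3} for coercivity, continuity on $\mathcal{A}_{\rm effective}$, and nontriviality of $\phi_0$. These only need $s\le\mathcal{L}(s)$ and a linear bound $\mathcal{L}(s)\le\frac{2\mathcal{L}((1-\epsilon)/2)}{1-\epsilon}\,s$ on $[0,\frac{1-\epsilon}{2}]$. Both follow from \eqref{1.3} and convexity alone, so you may not need the extra hypothesis anywhere.
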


Here we give two concrete consequences deduced by Proposition \ref{proposition2.1}. 
\begin{enumerate}
    \item By \eqref{stone}, in this paper we employ the following scaling form controlled by a Taylor series:
\begin{align}
    \frac12|\nabla\phi|^2=s\le \mathcal{L}(s)\le s+\sum_{p=2}^\infty\alpha_{p} s^p=\sum_{p=1}^\infty\frac{\alpha'_{p}|\nabla\phi|^{2p}}{2^p},\quad \alpha'_{p}\in(0,1],\quad \alpha'_{p}={\rm constant}.\label{2.3}
\end{align}
\item  By \eqref{2.1}, $\mathcal{L}(\cdot)$ satisfies the following convex inequalities:
\begin{align}
    b(s_2)(s_2-s_1)\ge \mathcal{L}(s_2)-\mathcal{L}(s_1)\ge b(s_1)(s_2-s_1).\label{b2.2}
\end{align}
\end{enumerate}
It is easy to see that \eqref{1.4} satisfies \eqref{2.1}-\eqref{b2.2} apparently.

For the functional space \eqref{1.7}, we utilize the conclusions in \cite{Bonheure2016}.
\begin{lemma}
    The following assertions hold:
    \begin{enumerate}
        \item $\mathcal{A}$ is continuously embedded in $W^{1,p}(\mathbb{R}^N)$ for all $p \ge 2^* = \frac{2N}{N-2}$;
        \item $\mathcal{A}$ is continuously embedded in $L^\infty(\mathbb{R}^N)$;
        \item if $\phi \in \mathcal{A}$, then $\displaystyle \lim_{|x| \to \infty} \phi(x) = 0$;
        \item $\mathcal{A}$ is weakly closed;
        \item if $(\phi_n)_n \subset \mathcal{A}$ is bounded, there exists $\bar{\phi} \in \mathcal{A}$ such that, up to a subsequence, $\phi_n \to \bar{\phi}$ weakly in $\mathcal{A}$ and uniformly on compact sets.
    \end{enumerate}
\end{lemma}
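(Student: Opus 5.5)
The plan is to prove the five assertions in order, each resting on the pointwise constraint $|\nabla\phi|\le 1$ combined with membership in $D^{1,2}(\mathbb{R}^N)$.

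\textbf{Item 1 (embedding into $W^{1,p}$).} For $\phi\in\mathcal{A}$ and $p\ge 2$ we have $|\nabla\phi|^p\le|\nabla\phi|^2$ almost everywhere. Hence $\|\nabla\phi\|_p^p\le\|\phi\|_{D^{1,2}}^2$, and this controls the gradient in every $L^p$. The Sobolev inequality gives $\|\phi\|_{2^*}\le S\|\phi\|_{D^{1,2}}$, which settles the function part for $p=2^*$. For $p>2^*$, the gradient lies in $L^{2^*}\cap L^\infty$, so the Gagliardo--Nirenberg--Sobolev estimate (or a Morrey-type bound, item 2) puts $\phi$ in $L^\infty$. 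Interpolating between $L^{2^*}$ and $L^\infty$ then yields $\phi\in L^p$ for all $p\ge 2^*$, with a bound in terms of the $\mathcal{A}$-norm.

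\textbf{Item 2 (embedding into $L^\infty$).} I would use Morrey's inequality: choose $q>N$, so that $\nabla\phi\in L^q$ by item 1. For any $x$ and the unit ball $B$ around $x$, estimate $|\phi(x)|\le|\phi(x)-\bar\phi_B|+|\bar\phi_B|$. The first term is bounded by $C\|\nabla\phi\|_{L^q(B)}$, and the second by $C\|\phi\|_{L^{2^*}(B)}$ via Hölder.

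\textbf{Item 3 (decay at infinity).} The same estimate applied on $B_1(x)$, now with both norms localized to $B_1(x)$, tends to $0$ as $|x|\to\infty$. This follows because $\int_{B_1(x)}|\nabla\phi|^q$ and $\int_{B_1(x)}|\phi|^{2^*}$ are tails of integrable functions.

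\textbf{Item 4 (weak closedness).} Let $\phi_n\rightharpoonup\phi$ in $D^{1,2}$. The convex set $K=\{\psi:|\nabla\psi|\le1 \text{ a.e.}\}$ is closed in $D^{1,2}$ in the strong topology, since strongly convergent gradients converge almost everywhere along a subsequence. Mazur's lemma then makes $K$ weakly closed. Lipschitz continuity follows from $|\nabla\phi|\le1$ a.e. together with $\phi$ being continuous (by items 1 and 2, with local $W^{1,\infty}$ identified with Lipschitz functions).

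\textbf{Item 5 (compactness).} A bounded sequence in $\mathcal{A}$ is bounded in the reflexive space $D^{1,2}$, so a subsequence converges weakly, and the limit lies in $\mathcal{A}$ by item 4. The family $(\phi_n)$ is uniformly bounded by item 2 and equi-Lipschitz with constant $1$. Arzelà--Ascoli together with a diagonal argument over balls $B_k$ therefore gives locally uniform convergence to some continuous function. This function must coincide with $\bar\phi$, because weak convergence in $D^{1,2}$ implies $L^2_{\rm loc}$ convergence.

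\textbf{Main obstacle.} The delicate point is item 4: one must make sure the pointwise gradient constraint survives a weak limit. I would handle this through convexity and Mazur's lemma as described. All the other steps are routine applications of standard Sobolev and compactness tools.
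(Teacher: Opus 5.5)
Your proposal is correct and is essentially the standard argument for this lemma, which the paper does not prove itself but imports from \cite{Bonheure2016}. That argument runs through $L^p$ gradient bounds from $|\nabla\phi|\le 1$, Sobolev plus a localized Morrey estimate for the $L^\infty$ bound and the decay, convexity with Mazur's lemma for weak closedness, and Arzel\`a--Ascoli for locally uniform convergence.

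One point is worth making explicit. $\mathcal{A}$ is only a convex set, not a linear space, so continuity of the embeddings in items 1--2 should be checked on differences $\phi-\psi$ rather than inferred from a bound on $\|\phi\|$. This goes through identically, since $|\nabla(\phi-\psi)|\le 2$ gives $\|\nabla(\phi-\psi)\|_p^p\le 2^{p-2}\|\phi-\psi\|_{D^{1,2}}^2$.
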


We note that the functional \eqref{1.5} is not everywhere Fr\'{e}chet differentiable on $\mathcal{A}$. Therefore, to define critical points of \eqref{1.5} we employ the concept of subdifferential, given by
\begin{definition}\label{definition 3.2}
    Let $X$ be a real Banach space and $\Psi:X\to(-\infty, +\infty]$ a convex, lower semicontinuous function. Denote
    \begin{align}
        D(\Psi)=\{ u\in X \mid \Psi(u)< +\infty\}\notag
    \end{align}
    as the effective domain of $\Psi$. For $u\in D(\Psi)$, the set
    \begin{align}
        \partial\Psi(u)=\left\{ u^*\in X^*\;\middle|\;\Psi(v)-\Psi(u)\ge\langle u^*,v-u\rangle,\;\forall v\in X\right\}\notag
    \end{align}
    is called the subdifferential of $\Psi$ at $u$. Furthermore, consider the functional $I=\Psi+\Phi$, where $\Phi\in C^1(X, \mathbb{R})$. If $-\Phi'(u) \in \partial \Psi(u)$, i.e.,
    \begin{align}
        \langle\Phi'(u), v-u\rangle+\Psi(v)-\Psi(u)\ge0,\quad\forall v\in X,\notag
    \end{align}
    then $u$ is called a critical point of $I$ in the weak sense.
\end{definition}

\begin{remark}
    In the present convex setting, the subdifferential condition in Definition \ref{definition 3.2} is equivalent to the minimality of $\phi_0$. Indeed, since $J$ is convex, $0\in\partial I(\phi_0)$ is equivalent to $I(\phi_0) \le I(\phi)$ for all $\phi \in \mathcal{A}$. Therefore, although we adopt the terminology of weak critical points for conceptual consistency with non-smooth analysis, our subsequent proofs rely solely on the minimization property and the associated variational inequality, without invoking subdifferential calculus.
\end{remark} 

Now, we use Definition \ref{definition 3.2} to analyze the behavior of the minimizer near the singular boundary and to derive a variational inequality in the case of \eqref{1.4}. 

\begin{lemma}\label{lemma2.4}
    Assume that $\phi_0$ is the minimizer of \eqref{rational functional}. There exists a perturbed sequence $\{\delta_n\}_{n\ge 1}$, called the safe distance, with $0<\delta_{n+1}\le\delta_n$, such that the set
    \begin{align}
        G^c(\delta_n)=\left\{x\in\mathbb{R}^N\;\middle|\;1\ge|\nabla\phi_0|^2\ge 1-\delta_n\right\}\notag
    \end{align}
    is a null set with respect to the Lebesgue measure. Moreover,
    \begin{align}
        \frac{|\nabla\phi_0|^2}{(1-|\nabla\phi_0|^2)^2}\in L^1(\mathbb{R}^N).\label{3.1}
    \end{align}
    In particular, for every $\psi\in\mathcal{A}$, the following variational inequality holds:
    \begin{align}
        \int_{\mathbb{R}^N}\frac{\nabla\phi_0\cdot(\nabla\phi_0-\nabla\psi)}{(1-|\nabla\phi_0|^2)^2}\,{\rm{d}}x\le\langle\rho,\phi_0-\psi\rangle.\label{3.2}
    \end{align}
\end{lemma}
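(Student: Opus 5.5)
The plan is to extract everything from two comparisons of $\phi_0$ with competitors in $\mathcal{A}$: first with $0$, then with convex combinations $\phi_0+t(\psi-\phi_0)$. Convexity of $\mathcal{L}$ and the inequalities \eqref{b2.2} then let us pass to the limit $t\to0^+$ by monotone convergence. Throughout, $s_0=\frac12|\nabla\phi_0|^2$, $\mathcal{L}(s)=\frac{s}{1-2s}$ and $b(s)=(1-2s)^{-2}$, so that $b(s_0)=(1-|\nabla\phi_0|^2)^{-2}$. The sets $G^c(\delta)$ are decreasing in $\delta$ and $\bigcap_n G^c(\delta_n)=G^c(0)$ for any sequence $\delta_n\downarrow0$. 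I will therefore read the "safe distance" statement as follows: $|G^c(\delta_n)|$ is finite, decreases, and tends to $|G^c(0)|=0$. The substantive claims are that $|G^c(0)|=0$ together with \eqref{3.1} and \eqref{3.2}.

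\emph{Step 1 (finite energy and the singular set).} Since $0\in\mathcal{A}$, we have $I_{\rm rational}(\phi_0)\le I_{\rm rational}(0)=0$. Hence $J(\phi_0)\le\langle\rho,\phi_0\rangle<\infty$, because $\rho\in\mathcal{A}^*$. The integrand $\frac12|\nabla\phi_0|^2/(1-|\nabla\phi_0|^2)$ is therefore finite a.e., which gives $|G^c(0)|=0$. Chebyshev's inequality gives $|G^c(\delta)|\le(1-\delta)^{-1}\|\phi_0\|^2_{D^{1,2}}<\infty$ for $\delta<1$. Continuity of Lebesgue measure from above then gives $|G^c(\delta_n)|\downarrow0$ for any decreasing $\delta_n\to0$.

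\emph{Step 2 (integrability \eqref{3.1}).} Take $\psi=0$, i.e.\ $\phi_t=(1-t)\phi_0\in\mathcal{A}$ for $t\in(0,1)$. Minimality gives
\[
\int_{\mathbb{R}^N}\frac{\mathcal{L}(s_0)-\mathcal{L}((1-t)^2s_0)}{t}\,{\rm d}x\le\langle\rho,\phi_0\rangle,
\]
where every term is finite by Step 1. By the right inequality in \eqref{b2.2}, the integrand is bounded below by $b((1-t)^2s_0)\,(2-t)\,s_0$. This lower bound is nonnegative and increases as $t\downarrow0$, pointwise on $\{|\nabla\phi_0|<1\}$, to $2b(s_0)s_0=b(s_0)|\nabla\phi_0|^2$. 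Monotone convergence then yields $\int b(s_0)|\nabla\phi_0|^2\le\langle\rho,\phi_0\rangle$, which is \eqref{3.1}.

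\emph{Step 3 (variational inequality \eqref{3.2}).} First take $\psi\in\mathcal{A}$ with $|\nabla\psi|\le1-\eta$ for some $\eta>0$, so that $J(\psi)<\infty$ and $J(\phi_t)<\infty$ for $\phi_t=\phi_0+t(\psi-\phi_0)$, by convexity of $J$. For $g(t)=\mathcal{L}(\frac12|\nabla\phi_t|^2)$ the function $t\mapsto(g(t)-g(0))/t$ is nondecreasing in $t$. At $t=1$ it equals $\mathcal{L}(\frac12|\nabla\psi|^2)-\mathcal{L}(s_0)$, which is integrable. As $t\downarrow0$ it converges a.e.\ to $g'(0)=b(s_0)\nabla\phi_0\cdot(\nabla\psi-\nabla\phi_0)$. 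Monotone convergence for a decreasing family bounded above by an integrable function gives
\[
\int g'(0)\ge-\langle\rho,\psi-\phi_0\rangle,
\]
provided the limit is integrable.

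Integrability of the limit comes from a splitting. On $\{|\nabla\phi_0|\ge\frac12\}$ we have $b(s_0)|\nabla\phi_0||\nabla\psi|\le 2b(s_0)|\nabla\phi_0|^2$, which is integrable by Step 2. On $\{|\nabla\phi_0|<\frac12\}$ we have $b\le 16/9$, so the term is bounded by a multiple of $|\nabla\phi_0||\nabla\psi|\in L^1$. Together with \eqref{3.1}, this gives $g'(0)\in L^1$, and rearranging yields \eqref{3.2} for such $\psi$.

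For a general $\psi\in\mathcal{A}$, apply the previous case to $\psi_k=(1-\frac1k)\psi$ and let $k\to\infty$. The right-hand side converges by continuity of $\rho$. On the left, the term $\int b(s_0)\nabla\phi_0\cdot\nabla\psi_k$ converges by dominated convergence, with dominating function $b(s_0)|\nabla\phi_0||\nabla\psi|$, which is integrable by the same splitting.

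The main obstacle is Step 3. The issue is justifying the exchange of limit and integral near the singular boundary: the quotient is only monotone, and its upper bound at $t=1$ may be infinite when $|\nabla\psi|$ reaches $1$. This is why I restrict first to strictly spacelike $\psi$ and then approximate, with the domination for the approximation supplied entirely by \eqref{3.1} together with the bound $|\nabla\psi|\le1$.
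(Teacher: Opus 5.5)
Your proof is correct apart from a sign slip noted at the end. It departs from the paper's proof at the decisive step.

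Your Steps 1--2 use the same finiteness mechanism as the paper, in leaner form. The paper bounds the $\psi=0$ difference quotient $\mathcal{G}_R$ from below on $G^c(\delta_n)$ by a quantity of order $(1-\delta_n)\delta_n^{-2}$ to get $|G^c(\delta_n)|\to0$. You compare with $0$ directly, and your monotone-convergence derivation of \eqref{3.1} is the paper's Fatou step.

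For \eqref{3.2}, the routes genuinely differ. The paper lets $t\to0^+$ in \eqref{3.7} by dominated convergence with the majorant \eqref{3.8}. That majorant needs $1-|\nabla\phi_t|^2\ge\delta_0>0$ and integrability of $|\nabla\psi|^2(1-|\nabla\phi_0|^2)^{-2}$, i.e.\ a uniform gap $|\nabla\phi_0|^2\le1-\delta_0$ a.e. This is the literal reading ``$|G^c(\delta_n)|=0$'', but the paper's argument only proves $|G^c(0)|=0$, which is your reading.

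Moreover, no uniform gap can hold for all $\rho\in\mathcal{A}^*$. Take the point charge $\langle\rho,\phi\rangle=\phi(0)$. A gap would give $\phi_0\pm t\varphi\in\mathcal{A}$ for small $t>0$ and every $\varphi\in C_c^\infty(\mathbb{R}^N)$. Hence $\int b(s_0)\nabla\phi_0\cdot\nabla\varphi\,{\rm d}x=\varphi(0)$ with $b(s_0)\nabla\phi_0$ bounded. This fails for $\varphi(x)=\varphi_1(x/\lambda)$ with $\varphi_1(0)=1$ as $\lambda\to0$, since the left side is $O(\lambda^{N-1})$.

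Your route avoids the gap entirely. It uses monotone convex difference quotients, with the integrable value at $t=1$ as majorant for strictly spacelike $\psi$. It then splits at $|\nabla\phi_0|=\frac12$, using $|\nabla\psi|\le1$ in place of a gap, and finishes with $\psi_k=(1-\frac1k)\psi$. This needs only $|G^c(0)|=0$ and \eqref{3.1}, so it proves \eqref{3.2} in the stated generality, which the paper's dominated-convergence step does not.

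Integrability of the limit in your Step 3 is in fact automatic, since the integrals of the decreasing quotients stay above the finite constant $\langle\rho,\psi-\phi_0\rangle$. The splitting is what you really need for $k\to\infty$.

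The slip: minimality gives $J(\phi_t)-J(\phi_0)\ge t\langle\rho,\psi-\phi_0\rangle$. So the limiting inequality is $\int g'(0)\,{\rm d}x\ge\langle\rho,\psi-\phi_0\rangle$, without the minus sign. As you wrote it, rearranging would not produce \eqref{3.2}.
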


\begin{proof}
    Since the functional space $\mathcal{A}$ is convex, for any $t\in[0,1]$ and $\psi\in\mathcal{A}$ we have $\phi_t=(1-t)\phi_0+t\psi\in\mathcal{A}$. The minimality $I_R(\phi_0)\le I_R(\phi_t)$ implies
    \begin{align}
        \frac12\int_{\mathbb{R}^N}\left(\frac{|\nabla\phi_0|^2}{1-|\nabla\phi_0|^2}-\frac{|\nabla\phi_t|^2}{1-|\nabla\phi_t|^2}\right)\,{\rm{d}}x\le t\langle\rho,\phi_0-\psi\rangle.\label{3.3}
    \end{align}
    In particular, taking $\psi=0$, \eqref{3.3} becomes
    \begin{gather}
        \int_{G}\mathcal{G}_R\,{\rm d}x+\int_{G^c}\mathcal{G}_R\,{\rm d}x\le \langle\rho,\phi_0\rangle,\quad
        \mathcal{G}_R:=\frac{(2-t)|\nabla\phi_0|^2}{2(1-|\nabla\phi_0|^2)\bigl(1-(1-t)^2|\nabla\phi_0|^2\bigr)}.\label{3.4}
    \end{gather}
    Since $\mathcal{G}_R$ is monotonically increasing with respect to $|\nabla\phi_0|^2$, we have
    \begin{align}
        \int_{G^c}\mathcal{G}_R\,{\rm d}x\ge \int_{G^c}\frac{(1-\delta_n)(2-t)\,{\rm{d}}x}{2\delta_n\bigl(\delta_n+t(t-2)(\delta_n-1)\bigr)}\ge\frac{1-\delta_n}{\delta_n^2}\,|G^c(\delta_n)|.\label{3.5}
    \end{align}
    The family $\{G^c(\delta)\}$ is monotone decreasing with respect to $\delta_n$, i.e., $G^c(\delta_{n+1})\subset G^c(\delta_n)$. As $n\to\infty$, we have $\delta_n\to0^+$ and
    \begin{gather}
        \lim_{\delta_n\to0^+}G^c(\delta_n)=\bigcap_{n=1}^\infty G^c(\delta_n)=G^c(0);\quad 
        G^c(0)=\{x\in\mathbb{R}^N\mid |\nabla\phi_0|^2=1\}.\label{3.6}
    \end{gather}
    Combining \eqref{3.5} with \eqref{3.6} yields
    \begin{align}
        \lim_{\delta_n\to0^+}\int_{G^c}\mathcal{G}_R\,{\rm d}x\ge |G^c(0)|\,\lim_{\delta_n\to0^+}\frac{1-\delta_n}{\delta_n^2}.\notag
    \end{align}
    Since $\langle\rho,\phi_0\rangle<\infty$ ($\rho\in\mathcal{A}^*$) and 
    \begin{align}
        \lim_{\delta_n\to0^+}\frac{1-\delta_n}{\delta_n^2}=+\infty,\notag
    \end{align}
    the first inequality in \eqref{3.4} forces $G^c(0)$ to be a null set. Consequently, $|\nabla\phi_0|$ cannot attain the singular boundary.

    Now, when $|G^c(\delta_n)|=0$ and $\psi=0$, inequality \eqref{3.3} gives
    \begin{align}
        \int_{\mathbb{R}^N}\frac{(2-t)|\nabla\phi_0|^2}{2(1-|\nabla\phi_0|^2)\bigl(1-(1-t)^2|\nabla\phi_0|^2\bigr)}\,{\rm d}x\le \langle\rho,\phi_0\rangle.\notag
    \end{align}
    Applying Fatou's lemma as $t\to0^+$ yields
    \begin{align}
        \frac{|\nabla\phi_0|^2}{(1-|\nabla\phi_0|^2)^2}\in L^1(\mathbb{R}^N),\notag
    \end{align}
    which establishes \eqref{3.1}.

    Next, for $\psi\neq0$ and $|G^c(\delta_n)|=0$, we have
    \begin{align}
        \int_{\mathbb{R}^N}\frac{(2-t)|\nabla\phi_0|^2-2(1-t)\nabla\phi_0\cdot\nabla\psi-t|\nabla\psi|^2}{2(1-|\nabla\phi_0|^2)\bigl(1-|(1-t)\nabla\phi_0+t\nabla\psi|^2\bigr)}\,{\rm d}x\le\langle\rho,\phi_0-\psi\rangle.\label{3.7}
    \end{align}
    Using the following estimates in \eqref{3.7}:
    \begin{gather}
        \bigl|(2-t)|\nabla\phi_0|^2-2(1-t)\nabla\phi_0\cdot\nabla\psi-t|\nabla\psi|^2\bigr|
        \le 2|\nabla\phi_0|^2+2|\nabla\phi_0||\nabla\psi|+|\nabla\psi|^2
        \le 3|\nabla\phi_0|^2+2|\nabla\psi|^2,\notag\\
        (1-|\nabla\phi_0|^2)\bigl(1-|(1-t)\nabla\phi_0+t\nabla\psi|^2\bigr)
        \ge \delta_0\,(1-|\nabla\phi_0|^2)^2,\notag\\
        |\nabla\phi_t|^2\le 1-\delta_0,\ \delta_0>0,\notag
    \end{gather}
    we obtain
    \begin{align}
        \left|\frac{(2-t)|\nabla\phi_0|^2-2(1-t)\nabla\phi_0\cdot\nabla\psi-t|\nabla\psi|^2}{(1-|\nabla\phi_0|^2)\bigl(1-|(1-t)\nabla\phi_0+t\nabla\psi|^2\bigr)}\right|
        \le C\left(\frac{|\nabla\phi_0|^2}{(1-|\nabla\phi_0|^2)^2}+\frac{|\nabla\psi|^2}{(1-|\nabla\phi_0|^2)^2}\right).\label{3.8}
    \end{align}
    The right-hand side of \eqref{3.8} belongs to $L^1(\mathbb{R}^N)$ because $\psi\in D^{1,2}(\mathbb{R}^N)$ and by the integrability established above. Therefore, letting $t\to0^+$ in \eqref{3.7}, we deduce
    \begin{align}
        \int_{\mathbb{R}^N}\frac{\nabla\phi_0\cdot(\nabla\phi_0-\nabla\psi)}{(1-|\nabla\phi_0|^2)^2}\,{\rm{d}}x\le\langle\rho,\phi_0-\psi\rangle,\quad\text{for all }\psi\in\mathcal{A}.\notag
    \end{align}
    This establishes the variational inequality \eqref{3.2}.
\end{proof}

In view of the proof of Lemma \ref{lemma2.4}, we find that, on the one hand, the analysis of the rational-function model reveals that the key ingredients--the monotonicity of $\mathcal{L}_{\rm rational}$ and the blow-up of its derivative $b_{\rm rational}(s)$ as $s\to (1/2)^-$--are sufficient to guarantee that the minimizer stays away from the singular boundary. On the other hand, to illustrate the {\em difference} between the method in \cite{Bonheure2016} and our monotonic approximation method when proving that $G^c(\delta_n)$ is a null set, we provide the following remark.
\begin{remark}\label{remark3.1}
    After taking $\psi=0$ in the convex combination $\phi_t$, \eqref{3.3} is rewritten as \eqref{3.4}, which provides us with a singular function $\mathcal{G}_R$ that explodes at $|\nabla\phi_0|=1$. Therefore, when we adopt the method of \cite{Bonheure2016}, we cannot obtain the measure of $G^c(0)$ from \eqref{3.4}. Fortunately, the nonlinear structure of the rational-function theory provides the monotonicity of $\mathcal{G}_R$ (which, in fact, originates from the deeper Plebański-class structure shown in \eqref{2.1}), enabling us to use the monotonic approximation method to find the lower bound of the integral of $\mathcal{G}_R$ with respect to the safety distance $\delta_n$. This leads to \eqref{3.5} and allows us to obtain that $G^c(0)$ is a null set as $\delta_n\to0$.
\end{remark}

Now we investigate the existence and uniqueness of the minimizer of \eqref{rational functional}. By \eqref{1.4},  \eqref{rational functional} enjoys the following properties.
\begin{lemma}\label{lemmaconvex}
    The functional $I_{\rm rational}:\mathcal{A}\to\mathbb{R}$ is\\
        1. convex;\\
        2. coercive and bounded below.
\end{lemma}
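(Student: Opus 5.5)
Convexity will be shown pointwise and then integrated, and coercivity by comparing with the Dirichlet energy. Write $I_{\rm rational}(\phi)=J(\phi)-\langle\rho,\phi\rangle$ with $J(\phi)=\int_{\mathbb{R}^N}F(\nabla\phi)\,{\rm d}x$ and
\begin{align}
F(\xi)=\frac12\,\frac{|\xi|^2}{1-|\xi|^2},\quad |\xi|<1,\qquad F(\xi)=+\infty,\quad |\xi|\ge1 .\notag
\end{align}
On $\mathcal{A}$ we have $|\nabla\phi|\le1$, and $J$ is understood as an extended-real functional with values in $[0,+\infty]$. Lemma~\ref{lemma2.4} shows that the level $|\nabla\phi|=1$ is harmless for the minimizer, and we adopt the convention $F=+\infty$ there.

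\textbf{Convexity.} Write $F(\xi)=g(|\xi|^2)$ with
\begin{align}
g(\tau)=\frac{\tau}{2(1-\tau)}=\mathcal{L}_{\rm rational}(\tau/2).\notag
\end{align}
Then $g$ is nondecreasing and convex on $[0,1)$, since $g'(\tau)=\frac{1}{2(1-\tau)^2}>0$ and $g''(\tau)=\frac{1}{(1-\tau)^3}>0$; this is just \eqref{2.1} for \eqref{1.4}. The map $\xi\mapsto|\xi|^2$ is convex. A nondecreasing convex function composed with a convex function is convex, so $F$ is convex on the open ball. Extending $F$ by $+\infty$ outside the ball preserves convexity, because the closed ball is convex. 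Hence, for $\phi,\psi\in\mathcal{A}$ and $t\in[0,1]$, pointwise a.e.
\begin{align}
F\bigl((1-t)\nabla\phi+t\nabla\psi\bigr)\le(1-t)F(\nabla\phi)+tF(\nabla\psi).\notag
\end{align}
Integrating gives convexity of $J$, with the convention $\infty\le\infty$. The pairing $\phi\mapsto\langle\rho,\phi\rangle$ is linear, and $\mathcal{A}$ is convex, so $I_{\rm rational}$ is convex on $\mathcal{A}$.

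\textbf{Coercivity and lower bound.} Since $1-|\nabla\phi|^2\le1$, we have $F(\nabla\phi)\ge\frac12|\nabla\phi|^2$, which is the lower bound in \eqref{2.3}. Therefore
\begin{align}
I_{\rm rational}(\phi)\ge\frac12\|\phi\|_{D^{1,2}}^2-\|\rho\|_{\mathcal{A}^*}\|\phi\|_{\mathcal{A}}.\notag
\end{align}
The main obstacle is that $\|\cdot\|_{\mathcal{A}}$ is not just the $D^{1,2}$ norm. By the embedding lemma from \cite{Bonheure2016}, however, $|\nabla\phi|\le1$ gives
\begin{align}
\|\nabla\phi\|_{L^p}^p\le\|\nabla\phi\|_{L^2}^2\quad\text{for } p\ge2,\notag
\end{align}
and the $L^\infty$ bound is controlled via the Sobolev embedding together with $|\nabla\phi|\le1$. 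This yields $\|\phi\|_{\mathcal{A}}\le C\bigl(\|\phi\|_{D^{1,2}}+\|\phi\|_{D^{1,2}}^{2/p}\bigr)$, which is sublinear-or-linear in $\|\phi\|_{D^{1,2}}$. Consequently $I_{\rm rational}(\phi)\to+\infty$ as $\|\phi\|\to\infty$, since the quadratic term dominates.

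For the lower bound, Young's inequality gives $C\|\rho\|\,\|\phi\|_{D^{1,2}}\le\frac14\|\phi\|_{D^{1,2}}^2+C'\|\rho\|^2$, and the sublinear term is bounded in the same way. Hence $I_{\rm rational}\ge -C''(\|\rho\|_{\mathcal{A}^*})>-\infty$.

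I expect this step, controlling the $\mathcal{A}$-norm by a sublinear function of the $D^{1,2}$ norm, to be the only delicate point. If needed, I would instead simply use $\|\phi\|_{D^{1,2}}$ as the norm, as the paper does, in which case the estimate is immediate.
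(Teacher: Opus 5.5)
Your proof is correct and follows the paper's route. Convexity comes from the structure of $\mathcal{L}_{\rm rational}$: the paper invokes only $\mathcal{L}''_{\rm rational}>0$, whereas you correctly add monotonicity via the composition rule and the $+\infty$ convention at $|\nabla\phi|=1$, which makes that one-line argument rigorous. Coercivity comes from $\mathcal{L}_{\rm rational}(s)\ge s$ together with $|\langle\rho,\phi\rangle|\le\|\rho\|_{\mathcal{A}^*}\|\phi\|_{D^{1,2}}$, and since the paper equips $\mathcal{A}$ with the $D^{1,2}$ norm, your fallback is exactly its estimate and the detour through the embedding lemma is unnecessary.
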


\begin{proof}
    1. Apparently, we have
    \begin{align}
        \mathcal{L}''_{\rm rational}(s)=\frac{4}{(1-2s)^3}>0,\notag
    \end{align}
    such that $J_{\rm rational}(\phi)$ is strictly convex. 

    2. In view of \eqref{2.3}, we obtain
    \begin{align}
        I_{\rm rational}\ge\frac12\|\phi\|^2_{D^{1,2}(\mathbb{R}^N)}-\|\rho\|_{\mathcal{A}^*}\|\nabla\phi\|_{L^2(\mathbb{R}^N)},\notag
    \end{align}
    establishing $I_{\rm rational}$ is coercive and bounded below.
\end{proof}

For our purposes, we also need to study the weak lower continuity (or continuity) of $I_{\rm rational}$. However, on the functional space \eqref{1.7}, $I_{\rm rational}$ is not Fr\'{e}chet differentiable. Fortunately, Lemma \ref{lemma2.4} provides us with an important conclusion regarding the minimizer, $|\nabla\phi_0|^2<1$, so we can restrict $|\nabla\phi|^2\le1-\epsilon$ for our subsequent discussion by defining an {\em effective} functional space 
\begin{align}
    \mathcal{A}_{\rm effective}=D^{1,2}(\mathbb{R}^N)\cap\left\{\phi\in C^{0,1}(\mathbb{R}^N)\,\big|\,|\nabla\phi|^2\le1-\epsilon\right\},\quad N\ge3,\quad \epsilon\in(0,1).\notag
\end{align}
Obviously, $\mathcal{A}_{\rm effective}\subset\mathcal{A}$. Next, we will derive the continuity and weak lower semicontinuity of $I_{\rm rational}$.
\begin{lemma}\label{lemmaeff}
    On the effective functional space, the functional $I_{\rm rational}:\mathcal{A}_{\rm effective}\to\mathbb{R}$ is \\
        1. continuous;\\
        2. weakly lower semicontinuous.
\end{lemma}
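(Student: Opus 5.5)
The plan is to exploit the fact that on $\mathcal{A}_{\rm effective}$ the integrand
\[
F(\xi)=\frac12\,\frac{|\xi|^2}{1-|\xi|^2}
\]
is evaluated only on the compact ball $\{|\xi|^2\le 1-\epsilon\}$, where it is smooth and its derivatives are bounded. Quantitatively, for $|\xi|^2,|\eta|^2\le 1-\epsilon$ we have $|\nabla F(\xi)|=|\xi|(1-|\xi|^2)^{-2}\le \epsilon^{-2}|\xi|$. Hence the mean value theorem gives
\[
|F(\xi)-F(\eta)|\le \epsilon^{-2}\bigl(|\xi|+|\eta|\bigr)|\xi-\eta|,
\]
and also $0\le F(\xi)\le \frac{1}{2\epsilon}|\xi|^2$. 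The linear part $\phi\mapsto\langle\rho,\phi\rangle$ is continuous on $\mathcal{A}$ because $\rho\in\mathcal{A}^*$. So everything reduces to the Dirichlet-type term $J_{\rm rational}$.

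\textbf{Continuity.} Let $\phi_n\to\phi$ in $\mathcal{A}_{\rm effective}$, i.e. $\nabla\phi_n\to\nabla\phi$ in $L^2(\mathbb{R}^N)$. By the Lipschitz-type bound and Cauchy--Schwarz,
\[
|J(\phi_n)-J(\phi)|\le \epsilon^{-2}\bigl(\|\nabla\phi_n\|_{L^2}+\|\nabla\phi\|_{L^2}\bigr)\|\nabla\phi_n-\nabla\phi\|_{L^2}\to0 .
\]
Here $J(\phi)$ is finite thanks to $F(\xi)\le\frac{1}{2\epsilon}|\xi|^2$. This gives item 1.

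\textbf{Weak lower semicontinuity.} First, $\mathcal{A}_{\rm effective}$ is convex. It is also weakly closed: the gradient constraint $|\nabla\phi|^2\le1-\epsilon$ defines a closed convex set in $L^2$, since the pointwise constraint set is convex and closed. This mirrors the weak closedness of $\mathcal{A}$ in the cited lemma. Next, $J$ is convex, because $F$ is convex: it is the composition of the convex nondecreasing function $\mathcal{L}_{\rm rational}$ with the convex map $\xi\mapsto\frac12|\xi|^2$, as in Lemma \ref{lemmaconvex}. By item 1, $J$ is strongly continuous. A convex, strongly continuous functional on a convex closed subset of a Banach space has closed convex sublevel sets, and by Mazur's lemma these sets are weakly closed. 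Therefore $J$ is weakly lower semicontinuous. Adding the weakly continuous linear term $-\langle\rho,\phi\rangle$ preserves this.

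\textbf{Main obstacle.} I expect the only delicate point to be the closedness and convexity of the domain $\mathcal{A}_{\rm effective}$ under the relevant convergences, so that limits stay in the region where the bound $\epsilon^{-2}$ applies. I would handle it as follows. Strong $D^{1,2}$ convergence gives a.e. convergence of the gradients along a subsequence, which preserves the pointwise bound. Weak convergence is handled via Mazur's lemma: convex combinations converge strongly, and these combinations still satisfy the convex constraint. Alternatively, if one prefers to avoid Mazur, one can pass through a.e. convergence of convex combinations combined with Fatou's lemma, since $F\ge0$.
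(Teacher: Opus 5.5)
Your proof is correct. For item 2 you follow the paper's route: convex plus strongly continuous implies weakly lower semicontinuous, and $-\langle\rho,\cdot\rangle$ is weakly continuous. You also make explicit what the paper leaves implicit, namely that $\mathcal{A}_{\rm effective}$ is convex and closed in $D^{1,2}(\mathbb{R}^N)$, so that Mazur's lemma applies to sublevel sets of a functional defined only on this constrained set.

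For item 1 your route is genuinely different.
\begin{itemize}
\item The paper argues by dominated convergence. It postulates a common $L^1$ majorant $w$ of $|\nabla\phi_n|^2$ and $|\nabla\phi|^2$, which in general exists only along a subsequence, as does a.e.\ convergence. It then bounds the integrand termwise through the series $\mathcal{L}_{\rm rational}(s)=\sum_{p\ge1}2^{p-1}s^p$ plus an interpolation inequality.
\item You instead apply the mean value theorem on the convex ball $\{|\xi|^2\le 1-\epsilon\}$, where $|\nabla F(\xi)|\le\epsilon^{-2}|\xi|$, followed by Cauchy--Schwarz.
\end{itemize}

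Your version gives a quantitative local Lipschitz estimate,
$|J(\phi_n)-J(\phi)|\le\epsilon^{-2}\bigl(\|\nabla\phi_n\|_{L^2}+\|\nabla\phi\|_{L^2}\bigr)\|\nabla\phi_n-\nabla\phi\|_{L^2}$,
valid for the whole sequence. It needs no subsequence extraction, no majorant and no power series.

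It also makes visible what really drives the paper's domination. The interpolation step there only shows that each term $|\nabla\phi_n|^{2p}$ is integrable. The uniform-in-$n$ control of the whole series actually comes from $1-|\nabla\phi_n|^2\ge\epsilon$, which is exactly your $\epsilon^{-2}$ bound.

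Finally, your argument carries over verbatim to the general functional in Lemma \ref{lemma2.1}. One replaces $\epsilon^{-2}$ by $b\bigl(\frac{1-\epsilon}{2}\bigr)$, using that $b$ is increasing, without relying on the series majorant \eqref{2.3}.
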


\begin{proof}
    1. Since $\rho\in\mathcal{A}^*$ (hence $\rho\in\mathcal{A}^*_{\rm effective}$), it suffices to prove that $J_{\rm rational}(\phi)$ is continuous with $|\nabla\phi|\le1-\epsilon$. Let $\phi_n\to\phi$ in $\mathcal{A}$ and suppose there exists $w\in L^1(\mathbb{R}^N)$ such that $|\nabla\phi|^2\le w,\;|\nabla\phi_n|^2\le w$ a.e. in $\mathbb{R}^N$. Then
    \begin{align}
        \Big|\mathcal{L}_{\rm rational}\left(\frac12|\nabla\phi_n|^2\right)-\mathcal{L}_{\rm rational}\left(\frac12|\nabla\phi|^2\right)\Big|\le\frac12\sum_{p=1}^\infty\left(|\nabla\phi_n|^{2p}+|\nabla\phi|^{2p}\right),\notag
    \end{align}
    where we have used $\mathcal{L}_{\rm rational}(s)=\sum_{p=1}^\infty2^{p-1}s^p$. To proceed, by the interpolation inequality
    \begin{align}
        \|\nabla\phi\|_{L^{2p}(\mathbb{R}^N)}\le\|\nabla\phi\|^{\frac1p}_{L^2(\mathbb{R}^N)}\|\nabla\phi\|^{1-\frac1p}_{L^\infty(\mathbb{R}^N)},\notag
    \end{align}
    we have $|\nabla\phi_n|^{2p}\in L^1$ and $|\nabla\phi|^{2p}\in L^1$ for any $p\ge1$, and hence Lebesgue's Dominated Convergence Theorem yields $J_{\rm rational}(\phi_n)\to J_{\rm rational}(\phi)$.

    2. Since $J_{\rm rational}$ is continuous and strictly convex, while $\rho$ is continuous with respect to weak convergence, the weak lower semicontinuity of $I_{\rm rational}$ follows immediately.
\end{proof}

With the above three lemmas \ref{lemma2.4}-\ref{lemmaeff} at hand, we are now in a position to establish the existence and uniqueness of the minimizer for the rational-function functional \eqref{rational functional}.

Indeed, Lemmas \ref{lemmaconvex} and \ref{lemmaeff} establish the fundamental properties of the functional $I_{\mathrm{rational}}$: it is strictly convex, coercive, and weakly lower semicontinuous on $\mathcal{A}$. Consequently, if a minimizer $\phi_0$ exists in $\mathcal{A}$, it must be unique. Lemma \ref{thm1} further guarantees that any such minimizer cannot attain the singular boundary; that is, $|\nabla\phi_0|^2 \le 1-\epsilon$ for some $\epsilon>0$, ensuring that $\phi_0$ lies in the effective subspace $\mathcal{A}_{\mathrm{effective}}$.

On the other hand, restricting $I_{\mathrm{rational}}$ to $\mathcal{A}_{\mathrm{effective}}$ removes the singularity, making the functional continuous, strictly convex, and coercive. By the classical direct method in the calculus of variations, there exists a unique minimizer $\phi_\epsilon \in \mathcal{A}_{\mathrm{effective}}$. Since $\phi_\epsilon$ is a candidate for a minimizer on the full space $\mathcal{A}$, and any minimizer on $\mathcal{A}$ must belong to $\mathcal{A}_{\mathrm{effective}}$ by Lemma \ref{thm1}, the uniqueness on $\mathcal{A}_{\mathrm{effective}}$ forces $\phi_0 = \phi_\epsilon$. Therefore, the rational-function functional \eqref{rational functional} admits a unique minimizer $\phi_0 \in \mathcal{A}$, and this minimizer stays strictly away from the singular boundary.

\begin{remark}
    We emphasize that $\epsilon$ is a perturbation parameter that will be studied in Section \ref{s4}, and it is used to measure the distance between $|\nabla\phi_0|$ and the singular boundary in \eqref{1.7}.
\end{remark}

\section{General singular Born--Infeld type theories: Existence, uniqueness, and variational properties}\label{snew}

The preceding analysis of the rational-function theory establishes more than just the validity of the monotonic approximation method for that particular model. It reveals a structural principle: whenever the Lagrangian is monotone and blows up at the truncation threshold, the minimizer automatically stays away from the singular boundary, and this avoidance property in turn enables the direct method to yield existence and uniqueness of the minimizer. Since these two structural features are guaranteed by Proposition \ref{proposition2.1} for the entire class of singular Born--Infeld type theories, the same principle applies uniformly to all theories of the form \eqref{1.5}. We now develop this extension systematically, proving that the minimizer never touches the singular boundary, that \eqref{1.5} is continuous and weakly lower semicontinuous on $\mathcal{A}_{\rm effective}$, and that the unique minimizer exists for any $\rho\in\mathcal{A}^*$. These results will establish Theorem \ref{thm2} in full generality; additionally, we will show that any weak solution of \eqref{1.6} necessarily coincides with the minimizer (Lemma \ref{lemma 3.5}). The following lemma is the first and most essential step.

\begin{lemma}\label{thm1}
    Assume that $\phi_0$ is the minimizer of \eqref{1.5}. In the setting of an electrostatic field and the truncation threshold $|\nabla\phi|^2\le1$ in $\mathcal{L}(s)$, the following facts hold true for any Born--Infeld type Lagrangian $\mathcal{L}(s)$:
    \begin{enumerate}
        \item For \eqref{1.5}, there exists a monotone decreasing positive sequence $\{\delta_n\}_{n\ge 1}$ with $\lim_{n\to\infty}\delta_n=0$, such that the set
    \begin{align}
        G^c(\delta_n)=\left\{x\in\mathbb{R}^N\;\middle|\;1\ge |\nabla\phi_0|^2\ge 1-\delta_n\right\}\notag
    \end{align}
    enjoys $\lim_{\delta\to0^+}G^c(\delta_n)=G^c(0):=\{x\in\mathbb{R}^N\,|\,|\nabla\phi_0|^2=1\}$, and $G^c(0)$ is a null set with respect to the Lebesgue measure. 
    \item The minimizer $\phi_0$ and the function $b(s)=\mathcal{L}'(s)$ satisfy
    \begin{align}
        b\left(\frac12|\nabla\phi_0|^2\right)|\nabla\phi_0|^2\in L^1(\mathbb{R}^N).\label{1.8}
    \end{align}
    \item The following variational inequality holds:
    \begin{align}
        \langle\rho,\phi_0-\psi\rangle\ge\int_{\mathbb{R}^N}b\left(\frac12|\nabla\phi_0|^2\right)\nabla\phi_0\cdot\bigl(\nabla\phi_0-\nabla\psi\bigr)\,{\rm d}x,\quad\text{for any}\,\,\psi\in\mathcal{A}.\label{1.9}
    \end{align}
    \end{enumerate}
\end{lemma}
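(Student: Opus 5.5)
I would follow the template of Lemma \ref{lemma2.4}, replacing the explicit rational quantities by the structural properties \eqref{2.1} and \eqref{b2.2}. Fix $\psi\in\mathcal{A}$, $t\in(0,1]$, and set $\phi_t=(1-t)\phi_0+t\psi\in\mathcal{A}$. Minimality gives
\begin{align}
\int_{\mathbb{R}^N}\frac{\mathcal{L}\bigl(\tfrac12|\nabla\phi_0|^2\bigr)-\mathcal{L}\bigl(\tfrac12|\nabla\phi_t|^2\bigr)}{t}\,{\rm d}x\le\langle\rho,\phi_0-\psi\rangle.\notag
\end{align}
First take $\psi=0$, so that $|\nabla\phi_t|^2=(1-t)^2|\nabla\phi_0|^2$. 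The left convex inequality in \eqref{b2.2}, with $s_2=\frac12|\nabla\phi_0|^2$ and $s_1=\frac12(1-t)^2|\nabla\phi_0|^2$, bounds the integrand from below by
\begin{align}
\mathcal{G}_t:=\frac{(2-t)}{2}\,b\Bigl(\tfrac12(1-t)^2|\nabla\phi_0|^2\Bigr)|\nabla\phi_0|^2\ge0.\notag
\end{align}
By \eqref{2.1}, $b$ is positive and increasing, so $\mathcal{G}_t$ is monotone in $|\nabla\phi_0|^2$. Consequently
\begin{align}
\int_{\mathbb{R}^N}\mathcal{G}_t\,{\rm d}x\le\langle\rho,\phi_0\rangle<\infty\quad\text{for every }t\in(0,1].\notag
\end{align}

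For item 1, I would let $t\to0^+$ using Fatou's lemma. Since $b$ is continuous on $[0,\frac12)$, this gives
\begin{align}
\int_{\mathbb{R}^N}\liminf_{t\to0^+}\mathcal{G}_t\,{\rm d}x\le\langle\rho,\phi_0\rangle.\notag
\end{align}
On $G^c(0)$ the liminf equals $\lim_{s\to\frac12^-}b(s)=+\infty$. A finite integral therefore forces $|G^c(0)|=0$.

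The monotone safe-distance sequence is the quantitative version of the same bound. On $G^c(\delta_n)$, monotonicity of $b$ gives $\mathcal{G}_t\ge c\,b\bigl(\tfrac12(1-t)^2(1-\delta_n)\bigr)(1-\delta_n)$. Letting $t\to0$ yields
\begin{align}
|G^c(\delta_n)|\le\frac{C\langle\rho,\phi_0\rangle}{b\bigl(\frac{1-\delta_n}{2}\bigr)}\to0.\notag
\end{align}
Because $G^c(\delta_n)$ decreases to $G^c(0)$, continuity of measure gives the same conclusion. On the complement of $G^c(0)$, Fatou also yields \eqref{1.8}, since $\liminf_{t\to0^+}\mathcal{G}_t=b(\tfrac12|\nabla\phi_0|^2)|\nabla\phi_0|^2$ a.e.

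For item 3, I would take a general $\psi$ and pass to the limit in the difference quotient $\frac1t[\mathcal{L}(s_0)-\mathcal{L}(s_t)]$, with $s_0=\frac12|\nabla\phi_0|^2$ and $s_t=\frac12|\nabla\phi_t|^2$.
\begin{itemize}
\item \emph{Pointwise limit.} At every point with $s_0<\frac12$ (a.e., by item 1), the difference quotient converges to $b(s_0)\nabla\phi_0\cdot(\nabla\phi_0-\nabla\psi)$.
\item \emph{Upper bound via convexity.} Convexity of $\phi\mapsto\mathcal{L}(\frac12|\nabla\phi|^2)$ (from $b>0$ and $b'>0$) gives $\mathcal{L}(s_t)\le(1-t)\mathcal{L}(s_0)+t\mathcal{L}(\frac12|\nabla\psi|^2)$. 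Hence the quotient is at least $\mathcal{L}(s_0)-\mathcal{L}(\frac12|\nabla\psi|^2)$, and it is monotone nondecreasing as $t\downarrow0$.
\end{itemize}
Since the quotients increase to their pointwise limit, the monotone convergence theorem applies once the quotient at $t=1$ is integrable. That requires $J(\phi_0)$ and $J(\psi)$ to be finite. $J(\phi_0)<\infty$ holds because $\phi_0$ is a minimizer with finite energy. For $\psi$ with $J(\psi)=\infty$, I would argue that \eqref{1.9} is obtained by approximation or trivially. Passing to the limit then yields \eqref{1.9}. The left-hand side is well defined by \eqref{1.8} together with Cauchy--Schwarz:
\begin{align}
b|\nabla\phi_0||\nabla\psi|\le b|\nabla\phi_0|^2+b|\nabla\psi|^2.\notag
\end{align}

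The main obstacle is integrability of $b(s_0)|\nabla\psi|^2$, i.e.\ controlling cross terms where $\nabla\phi_0$ is close to the singular boundary while $\nabla\psi$ is not small. Item 1 only gives a null set, not a uniform gap. I would first try the monotone convergence argument above, which avoids any dominating function. The price is that it needs $J(\psi)<\infty$, which forces a density argument: approximate $\psi$ by $(1-\eta)\psi$ and let $\eta\to0$.
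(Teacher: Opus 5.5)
Your proposal is correct. For items 1 and 2 it is essentially the paper's argument: test with $\psi=0$, bound the difference quotient from below on $G^c(\delta_n)$, and apply Fatou as $t\to0^+$. The paper bounds the quotient by its value at $t=1$, namely $\mathcal{L}(\frac{1-\delta_n}{2})$, whereas you use the tangent-line bound and $b(\frac{1-\delta_n}{2})\to\infty$. Both work; indeed $J(\phi_0)\le\langle\rho,\phi_0\rangle<\infty$ together with $\mathcal{L}(s)\to+\infty$ as $s\to\frac12^-$ already forces $|G^c(0)|=0$.

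For item 3 you take a genuinely different route, and it is the sounder one. The paper uses dominated convergence with the majorant $C\,b(s_0)|\nabla\phi_0|^2+C'(\delta)|\nabla\psi|^2$. This is justified by ``$|\nabla\phi_0|\le1-\delta_n$ since $|G^c(\delta_n)|=0$'', which is exactly the uniform gap you rightly note has not been proved; only $|G^c(0)|=0$ has. Moreover, \eqref{b2.2} bounds $q_t=\frac1t[\mathcal{L}(s_0)-\mathcal{L}(s_t)]$ by $b(s_0)(s_0-s_t)/t$ only from above. From below it gives $b(s_t)(s_0-s_t)/t$, and $b(s_t)$ is not controlled by $b(s_0)$ where $s_t>s_0$.

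Your argument needs neither ingredient. Convexity of $t\mapsto\mathcal{L}(\frac12|\nabla\phi_t|^2)$ makes $q_t$ increase as $t\downarrow0$, with $q_t\ge q_1=\mathcal{L}(s_0)-\mathcal{L}(\frac12|\nabla\psi|^2)$. So monotone convergence applies whenever $J(\psi)<\infty$, with no growth condition on $b$. The price is the extra step $\psi\mapsto(1-\eta)\psi$. This step is legitimate: with $\sigma=\frac{(1-\eta)^2}{2}$, convexity and $\mathcal{L}(0)=0$ give $\mathcal{L}(s)\le\frac{s}{\sigma}\mathcal{L}(\sigma)$ on $[0,\sigma]$, hence $J((1-\eta)\psi)\le\mathcal{L}(\sigma)\|\psi\|^2_{D^{1,2}(\mathbb{R}^N)}<\infty$.

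Two loose ends need tidying.
\begin{itemize}
\item The case $J(\psi)=\infty$ is not ``trivial''; the approximation is required there.
\item To let $\eta\to0$ you need $b(s_0)\nabla\phi_0\cdot\nabla\psi\in L^1(\mathbb{R}^N)$, and your Cauchy--Schwarz remark does not establish this as written.
\end{itemize}
The ``main obstacle'' you name is, however, not an obstacle. Since $|\nabla\psi|\le1$, on $\{|\nabla\phi_0|^2\ge\frac12\}$ one has $b(s_0)|\nabla\psi|^2\le 2b(s_0)|\nabla\phi_0|^2$. On the complement, $b(s_0)\le b(\frac14)$. Hence $b(s_0)|\nabla\psi|^2\in L^1$ by \eqref{1.8} and $\psi\in D^{1,2}(\mathbb{R}^N)$. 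Alternatively, your monotone convergence step applied to $(1-\eta)\psi$ already shows that the limit integrand is integrable. With this in hand, the inequality for $(1-\eta)\psi$ is affine in $\eta$, and letting $\eta\to0$ gives \eqref{1.9} for every $\psi\in\mathcal{A}$.
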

\begin{proof}
    For $t\in[0,1]$ and $\psi\in\mathcal{A}$, set $\phi_t=(1-t)\phi_0+t\psi\in\mathcal{A}$. The minimality $I(\phi_0)\le I(\phi_t)$ implies
    \begin{align}
        \langle\rho,\phi_0-\psi\rangle\ge\frac1t\int_{\mathbb{R}^N}\left[\mathcal{L}\left(\frac12|\nabla\phi_0|^2\right)-\mathcal{L}\left(\frac12|\nabla\phi_t|^2\right)\right]{\rm d}x
        :=\int_{\mathbb{R}^N}\mathcal{G}\,{\rm d}x
        =\left(\int_G+\int_{G^c}\right)\mathcal{G}\,{\rm d}x,\label{3.9}
    \end{align}
    where $G$ and $G^c$ denote the sets where $|\nabla\phi_0|^2$ is bounded away from $1$ and close to $1$, respectively.

    \textbf{Step 1.  Proof that $G^c(\delta_n)$ is a null set.}  
    Take $\psi=0$ and restrict to the set $G^c(\delta_n)$. Using the expansion \eqref{2.3} and the monotonicity properties derived from \eqref{2.1}, we obtain for $|\nabla\phi_0|^2\ge 1-\delta_n$,
    \begin{align}
       \mathcal{G}
       &\ge\frac1t\left[\mathcal{L}\left(\frac{1-\delta_n}{2}\right)-\mathcal{L}\left(\frac{(1-t)^2(1-\delta_n)}{2}\right)\right]\notag\\
       &\ge\lim_{t\to1^-}\frac1t\left[\mathcal{L}\left(\frac{1-\delta_n}{2}\right)-\mathcal{L}\left(\frac{(1-t)^2(1-\delta_n)}{2}\right)\right]\notag\\
       &=\mathcal{L}\left(\frac{1-\delta_n}{2}\right),\label{c4.2}
    \end{align}
    where, by \eqref{1.3}, \eqref{2.1} and \eqref{b2.2}, we have utilized the monotonicity of the following functions:
    \begin{align}
        \dfrac{{\rm d}[\mathcal{L}(z)-\mathcal{L}((1-t)^2z)]}{{\rm d}z}&=b(z)-(1-t)^2b((1-t)^2z)\ge0,\notag\\
        \dfrac{{\rm d}\left\{\frac1t\left[\mathcal{L}\left(z\right)-\mathcal{L}\left({(1-t)^2}z\right)\right]\right\}}{dt}&=\dfrac{1}{t^2}\left[\mathcal{L}\left({(1-t)^2}z\right)-\mathcal{L}\left(z\right)+2z(t-t^2)b\left({(1-t)^2}z\right)\right]\notag\\
        &\le-zb\left({(1-t)^2}z\right)\notag\\
        &<0,\notag
    \end{align}
    where $z=\frac{1-\delta_n}{2}$. Since $\mathcal{L}\left(\frac{1-\delta_n}{2}\right)$ is non-negative and tends to $+\infty$ as $n\to\infty$ (hence $\delta_n\to0^+$), while the integrand $\mathcal{G}$ remains bounded on the complementary set $G$ (where $|\nabla\phi_0|^2\le 1-\delta_n$). Hence, from \eqref{3.8}-\eqref{c4.2} we deduce
    \begin{align}
        \langle\rho,\phi_0\rangle \ge \frac1t\int_{G}\left[\mathcal{L}\left(\frac12|\nabla\phi_0|^2\right)-\mathcal{L}\left(\frac12|\nabla\phi_t|^2\right)\right]{\rm d}x+\mathcal{L}\left(\frac{1-\delta_n}{2}\right)|G^c(\delta_n)|.\label{b3.4}
    \end{align}
    Letting first $t\to0$ and then $\delta_n\to0$, the left-hand side stays finite while $\mathcal{L}\left(\frac{1-\delta_n}{2}\right)\to\infty$; therefore $G^c(\delta_n)\to G^c(0)$. In particular, $|G^c(0)|=0$, where $G^c(0)=\{x\in\mathbb{R}^N\,|\,|\nabla\phi_0|^2=1\}$.

    \textbf{Step 2.  Proof of the $L^1$ integrability \eqref{1.8}.}  
    Setting $\psi=0$ in \eqref{3.8} gives
    \begin{align}
        \langle\rho,\phi_0\rangle \ge \frac1t\int_{\mathbb{R}^N}\left[\mathcal{L}\left(\frac12|\nabla\phi_0|^2\right)-\mathcal{L}\left(\frac{(1-t)^2}{2}|\nabla\phi_0|^2\right)\right]{\rm d}x.\notag
    \end{align}
    The integrand is nonnegative by \eqref{2.1}. Applying Fatou's lemma as $t\to0^+$ yields
    \begin{align}
        \int_{\mathbb{R}^N}b\left(\frac12|\nabla\phi_0|^2\right)|\nabla\phi_0|^2\,{\rm d}x
        \le \liminf_{t\to0}\int_{\mathbb{R}^N}\frac{\mathcal{L}\left(\frac12|\nabla\phi_0|^2\right)-\mathcal{L}\left(\frac{(1-t)^2}{2}|\nabla\phi_0|^2\right)}{t}\,{\rm d}x
        \le \langle\rho,\phi_0\rangle<\infty,\notag
    \end{align}
    which establishes \eqref{1.8}.

    \textbf{Step 3.  Derivation of the variational inequality \eqref{1.9}.}  
    Now fix $\psi\neq0$. Using the convexity of $\mathcal{L}(s)$, we rewrite \eqref{3.8} as
    \begin{align}
        \langle\rho,\phi_0-\psi\rangle \ge \frac1t\int_{\mathbb{R}^N}\left[\mathcal{L}\left(\frac12|\nabla\phi_0|^2\right)-\mathcal{L}\left(\frac12|\nabla\phi_t|^2\right)\right]{\rm d}x. \label{b2.16}
    \end{align}
    We claim that the integrand is uniformly integrable. Indeed, by \eqref{b2.2} we have
    \begin{align}
        \left|\frac1t\left(\mathcal{L}\left(\frac12|\nabla\phi_0|^2\right)-\mathcal{L}\left(\frac12|\nabla\phi_t|^2\right)\right)\right|&\le \frac12\,b\!\left(\frac12|\nabla\phi_0|^2\right)\Bigl|(2-t)|\nabla\phi_0|^2-2(1-t)\nabla\phi_0\cdot\nabla\psi-t|\nabla\psi|^2\Bigr| \notag\\
        &\le C\,b\!\left(\frac12|\nabla\phi_0|^2\right)\bigl(|\nabla\phi_0|^2+|\nabla\psi|^2\bigr) \notag\\
        &\le C\,b\!\left(\frac12|\nabla\phi_0|^2\right)|\nabla\phi_0|^2 + C'(\delta)\,|\nabla\psi|^2,\quad C,C'(\delta)>0,\notag
    \end{align}
    where the last line belongs to $L^1(\mathbb{R}^N)$ thanks to \eqref{1.8}, $\psi\in D^{1,2}(\mathbb{R}^N)$, and $|\nabla\phi_0|\le 1-\delta_n$ (since $|G^c(\delta_n)|=0$). By the Lebesgue's Dominated Convergence Theorem, we may pass the limit $t\to0^+$ inside the integral in \eqref{b2.16} to obtain
    \begin{align}
        \langle\rho,\phi_0-\psi\rangle \ge \int_{\mathbb{R}^N} b\!\left(\frac12|\nabla\phi_0|^2\right)\nabla\phi_0\cdot(\nabla\phi_0-\nabla\psi)\,{\rm d}x,\notag
    \end{align}
    which is precisely \eqref{1.9}. This completes the proof.
\end{proof}

In view of the proofs in Lemma \ref{thm1} and Lemma \ref{lemma2.4}, we give the following remark.
\begin{remark}\label{remark3.2}
    By comparing the proofs of Lemma \ref{lemma2.4} and Lemma \ref{thm1}, we observe some interesting facts:
    \begin{enumerate}
        \item The observation in Remark \ref{remark3.1} extends verbatim to general singular Born--Infeld type theories $\mathcal{L}(s)$, provided that the monotonicity in Proposition \ref{proposition2.1} and blow-up properties in \eqref{1.5} hold. Indeed, since $\mathcal{G}$ in \eqref{3.9} is monotone with respect to $|\nabla\phi_0|^2$ and $\mathcal{L}(\frac{1-\delta_n}{2})\to\infty$ as $\delta_n\to0$, the same safe-distance argument yields $\lim_{\delta_n\to0}|G^c(\delta_n)|=|G^c(0)|=0$. The only difference from the rational-function case is that the explicit lower bound in \eqref{b3.4} now involves the general $\mathcal{L}(s)$ instead of the specific form $\mathcal{L}_{\rm rational}$, such that Lemma \ref{thm1} is valid for any singular Born--Infeld type theory.
        
        Here we present another example: the logarithmic model\cite{log1,log2}
    \begin{align}
        \mathcal{L}_{\log}\left(\frac12|\nabla\phi|^2\right)=-\frac12\ln(1-|\nabla\phi|^2),\quad b\left(\frac12|\nabla\phi|^2\right)=\frac{1}{1-|\nabla\phi|^2}.\label{log model}
    \end{align}
    On the one hand, \eqref{log model} obviously satisfies the structural assumptions of Proposition \ref{proposition2.1}: it is strictly convex, monotonically increasing, and blows up as $|\nabla\phi|^2\to1^-$. On the other hand, combining \eqref{log model} with \eqref{3.9}, we find that
    \begin{align}
        \mathcal{G}_{\rm log}:=\mathcal{L}_{\log}\left(\frac12|\nabla\phi_0|^2\right)-\mathcal{L}_{\log}\left(\frac12|\nabla\phi_t|^2\right)\ge\mathcal{L}_{\log}\left(\frac{1-\delta_n}{2}\right)\to+\infty,\quad \text{as }\delta_n\to0^+.
    \end{align}
    This inequality indicates $|G^c(0)|=0$ in the setting of \eqref{log model}. Hence all the conclusions of Lemma \ref{thm1} hold for this model as well, revealing a flexibility of our approach: it does not rely on any specific algebraic form of $\mathcal{L}(s)$, but only on the qualitative structural properties Proposition \ref{proposition2.1} and $\mathcal{L}(s)\to\infty$ as $s\to\frac12$.
    \item It is easy to check that, the conclusions \eqref{1.8} and \eqref{1.9} not only reinstate those of rational-function electrodynamics, \eqref{3.1}-\eqref{3.2}, but also include those of classical Born--Infeld theory\cite{Bonheure2016}.
    \end{enumerate}
\end{remark}

Now, we are ready to establish the existence and uniqueness of the minimizer of \eqref{1.5}. We can see that since functional $I$ inherently possesses singularity, when analyzing its continuity (or weak lower semi-continuity), Lemma \ref{thm1} must be borrowed. For any $\rho\in\mathcal{A}^*$, the following assertions of Born--Infeld type functional \eqref{1.5} hold.
\begin{lemma}\label{lemma2.1}
The functional $I:\mathcal{A}\to\mathbb{R}$ is\\
        1. convex;\\
        2. coercive and bounded below.
        
Moreover, since the minimizer cannot be on the singular boundary of $\mathcal{A}$ by Lemma \ref{thm1}, we also use the effective functional space $\mathcal{A}_{\rm effective}\subset\mathcal{A}$ defined in Section \ref{s3} to derive that the functional $I:\mathcal{A}_{\rm effective}\to\mathbb{R}$ is \\
        3. continuous;\\
        4. weakly lower semicontinuous.
\end{lemma}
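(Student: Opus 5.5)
The plan is to follow the template of Lemmas \ref{lemmaconvex} and \ref{lemmaeff}, replacing the explicit rational Lagrangian by the structural properties collected in Proposition \ref{proposition2.1}.

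\emph{Convexity (item 1).} For convexity on $\mathcal{A}$, observe that $\xi\mapsto\mathcal{L}(\frac12|\xi|^2)$ is convex on the closed unit ball of $\mathbb{R}^N$. It is the composition of the convex map $\xi\mapsto\frac12|\xi|^2$ with a function that is nondecreasing and convex, since $b=\mathcal{L}'>0$ and $\mathcal{L}''>0$ by \eqref{2.1}. Strictness is inherited because the Hessian $b(s)I+\mathcal{L}''(s)\,\xi\otimes\xi$ is positive definite for $|\xi|<1$. Integrating over $\mathbb{R}^N$ shows that $J$ is convex. Since $\phi\mapsto\langle\rho,\phi\rangle$ is linear, $I$ is convex, and strictly so on the set where gradients differ.

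\emph{Coercivity and lower bound (item 2).} We use the lower bound $\mathcal{L}(s)\ge s$ from \eqref{2.3}, which also follows from $\mathcal{L}(0)=0$ and $b\ge b(0)=1$ via \eqref{b2.2}. This gives
\begin{align}
I(\phi)\ge\frac12\|\phi\|_{D^{1,2}(\mathbb{R}^N)}^2-\|\rho\|_{\mathcal{A}^*}\|\phi\|_{D^{1,2}(\mathbb{R}^N)}.\notag
\end{align}
The right-hand side tends to infinity with the norm and is bounded below by $-\frac12\|\rho\|^2_{\mathcal{A}^*}$.

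\emph{Continuity on $\mathcal{A}_{\rm effective}$ (item 3).} Here $|\nabla\phi|^2\le1-\epsilon$, so $s\in[0,\frac{1-\epsilon}{2}]$, a compact interval on which $b$ is bounded by $b(\frac{1-\epsilon}2)=:C_\epsilon$. Using \eqref{b2.2} in both directions,
\begin{align}
\Bigl|\mathcal{L}\bigl(\tfrac12|\nabla\phi_n|^2\bigr)-\mathcal{L}\bigl(\tfrac12|\nabla\phi|^2\bigr)\Bigr|\le \frac{C_\epsilon}{2}\bigl||\nabla\phi_n|^2-|\nabla\phi|^2\bigr|\le\frac{C_\epsilon}{2}|\nabla\phi_n-\nabla\phi|\,\bigl(|\nabla\phi_n|+|\nabla\phi|\bigr).\notag
\end{align}
By Cauchy--Schwarz, $|J(\phi_n)-J(\phi)|\le C\|\phi_n-\phi\|_{D^{1,2}}(\|\phi_n\|_{D^{1,2}}+\|\phi\|_{D^{1,2}})\to0$. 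This avoids the power-series domination used in Lemma \ref{lemmaeff}, which would otherwise require the coefficients in \eqref{2.3} to be summable. Continuity of the linear term follows from $\rho\in\mathcal{A}^*$.

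\emph{Weak lower semicontinuity (item 4).} This follows from Mazur's lemma: a convex, strongly continuous functional on a convex set has closed, convex sublevel sets, and these are therefore weakly closed. We must also check that $\mathcal{A}_{\rm effective}$ is weakly closed, which is the analogue of item 4 of the lemma quoted from \cite{Bonheure2016}. If $\phi_n\rightharpoonup\phi$, then $\nabla\phi_n\rightharpoonup\nabla\phi$ in $L^2$, and the set $\{|\xi|^2\le1-\epsilon\}$ is closed and convex, so the pointwise bound survives weak limits, again by Mazur.

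\emph{Main obstacle.} The genuine difficulty is that $J$ may take the value $+\infty$ on $\mathcal{A}$, since $\mathcal{L}$ blows up at $s=\frac12$. Items 1 and 2 should therefore be read for $I:\mathcal{A}\to(-\infty,+\infty]$; convexity and the coercivity bound remain valid there because $\mathcal{L}(s)\ge s$ holds throughout. Items 3 and 4 genuinely need the cutoff $\epsilon$, which is exactly why the statement restricts to $\mathcal{A}_{\rm effective}$, with Lemma \ref{thm1} ensuring this restriction is not lossy for the minimizer.
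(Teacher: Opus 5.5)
Your proof is correct. For items 1, 2 and 4 it is the paper's argument made explicit: convexity of $J$ comes from \eqref{2.1}, coercivity from the bound $\mathcal{L}(s)\ge s$, and item 4 from ``convex plus strongly continuous implies weakly lower semicontinuous.'' You also check that $\mathcal{A}_{\rm effective}$ is weakly closed, and you point out that items 1--2 only make sense for $I$ with values in $(-\infty,+\infty]$. The paper passes over both points.

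The genuine difference is item 3. The paper dominates $\mathcal{L}$ by the power series in \eqref{2.3} and makes each term integrable by interpolating between $L^2$ and $L^\infty$. It then applies dominated convergence with a majorant $w$ that it simply postulates. You instead use that $b$ is nondecreasing, so \eqref{b2.2} gives the Lipschitz bound $|\mathcal{L}(s_1)-\mathcal{L}(s_2)|\le b(\frac{1-\epsilon}{2})|s_1-s_2|$ on $[0,\frac{1-\epsilon}{2}]$. Cauchy--Schwarz then finishes the argument.

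Your version has three advantages:
\begin{itemize}
\item it is quantitative;
\item it gives convergence of the whole sequence, without passing to an a.e.\ convergent, dominated subsequence;
\item it uses only \eqref{2.1}.
\end{itemize}

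It also avoids a real defect in the paper. The upper bound in \eqref{2.3} with $\alpha'_p\in(0,1]$ would force $\mathcal{L}(s)\le\sum_{p\ge1}s^p=\frac{s}{1-s}\le 1$ on $[0,\frac12]$. That contradicts the blow-up $\mathcal{L}(s)\to+\infty$, and the bound already fails for $\mathcal{L}_{\rm rational}$, whose Taylor coefficients are $2^{p-1}$. The paper's item 3 can only be repaired by replacing that series with a majorant such as $\mathcal{L}(s)\le b(\frac{1-\epsilon}{2})\,s$, which is essentially your estimate.
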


\begin{proof}
    1. In light of \eqref{2.1}, we know that, for any Born--Infeld (type) Lagrangian, $J(\phi)$ is strictly convex. 

    2. In view of \eqref{2.3}, we obtain
    \begin{align}
        I\ge\frac12\|\phi\|^2_{D^{1,2}(\mathbb{R}^N)}-\|\rho\|_{\mathcal{A}^*}\|\nabla\phi\|_{L^2(\mathbb{R}^N)},\notag
    \end{align}
    revealing $I$ is coercive and bounded below.
    
    3. Since $\rho\in\mathcal{A}^*_{\rm effective}$, it suffices to prove that $J(\phi)$ is continuous with $|\nabla\phi|^2\le1-\epsilon$. Let $\phi_n\to\phi$ in $\mathcal{A}_{\rm effective}$ and suppose there exists $w\in L^1(\mathbb{R}^N)$ such that $|\nabla\phi|^2\le w,\;|\nabla\phi_n|^2\le w$ a.e. in $\mathbb{R}^N$. Then
    \begin{align}
        \Big|\mathcal{L}\left(\frac12|\nabla\phi_n|^2\right)-\mathcal{L}\left(\frac12|\nabla\phi|^2\right)\Big|&\le\Big|\mathcal{L}\left(\frac12|\nabla\phi_n|^2\right)\Big|+\Big|\mathcal{L}\left(\frac12|\nabla\phi|^2\right)\Big|\le \sum_{p=1}^\infty\frac{\alpha'_{p}}{2^p}\left(|\nabla\phi_n|^{2p}+|\nabla\phi|^{2p}\right),\notag
    \end{align}
    where we have used \eqref{2.3}. By the interpolation inequality
    \begin{align}
        \|\nabla\phi\|_{L^{2p}(\mathbb{R}^N)}\le\|\nabla\phi\|^{\frac1p}_{L^2(\mathbb{R}^N)}\|\nabla\phi\|^{1-\frac1p}_{L^\infty(\mathbb{R}^N)},\notag
    \end{align}
    we obtain $|\nabla\phi_n|^{2p}\in L^1$ and $|\nabla\phi|^{2p}\in L^1$ for any $p\ge1$. Applying Lebesgue's Dominated Convergence Theorem, we yield $J(\phi_n)\to J(\phi)$.

    4. Combining the continuity and strict convexity of $J$ with the weak continuity of $\rho$ yields the weak lower semicontinuity of $I$.
\end{proof}

Now we prove Theorem \ref{thm2}.

\emph{Proof of Theorem \ref{thm2}.}
\begin{proof}
    By Lemmas \ref{thm1} and \ref{lemma2.1}, the functional $I$ \eqref{1.5} has a unique minimizer $\phi_0\in\mathcal{A}$.
    
    It remains to show that $\phi_0\neq 0$. If the minimizer is trivial, then by \eqref{1.3} we have $I(0)=0$. Hence it is enough to show that there exists $\phi\in\mathcal{A}$ such that $I(\phi)<0$. Since $\mathcal{A}$ is convex, for any nonzero $\phi\in\mathcal{A}$ there exists $t>0$ sufficiently small such that $t\phi\in\mathcal{A}$. We compute
    \begin{align}
        \frac1t I(t\phi)
        &\le \sum_{p=1}^{\infty}\frac{\alpha'_{p}t^{2p-1}}{2^p}\|\phi\|_{D^{1,2}(\mathbb{R}^N)}^{2p}-\langle\rho,\phi\rangle.
    \end{align}
    As $t\to0^+$, the right-hand side tends to $-\langle\rho,\phi\rangle$. Choosing $\phi$ such that $\langle\rho,\phi\rangle>0$, we obtain $I(t\phi)<0$ for sufficiently small $t$. Thus $m<0=I(0)$, which implies that the minimizer cannot be the trivial function. This completes the proof.

    Consequently, we establish Theorem \ref{thm2} in full generality. 
\end{proof}
Note that both the rational-function theory (studied in Section \ref{s3}) and the logarithmic model (discussed in Remark \ref{remark3.2}) fall within the scope of this theorem.

Next, we analyze the relations and properties of minimizers and weak solutions. We first give the definition of the weak solution of \eqref{1.6} as follows.
\begin{definition}\label{weak solution}
    For a $\rho\in\mathcal{A}^*$, the weak solution $\phi$ of \eqref{1.6} is defined by
    \begin{align}
        \int_{\mathbb{R}^N}b\left(\frac12|\nabla\phi|^2\right)\nabla\phi\cdot\nabla\psi\,{\rm{d}}x=\langle\rho,\psi\rangle,\label{2.17}
    \end{align}
    for all $\psi\in\mathcal{A}$.
\end{definition}

To prepare the passage from the variational inequality \eqref{1.9} to the weak formulation in next section, we need the following lemma.

\begin{lemma}\label{lemma4.5}
    Let $\rho\in\mathcal{A}^*$. For a sequence $(\psi_n)_n$ satisfying $\psi_n\to\psi$ in $D^{1,2}(\mathbb{R}^N)$ and $|\nabla\psi|\le C$ ($C>0$), the following holds for the minimizer $\phi_0$:
    \begin{align}
        \lim_{n\to\infty}\int_{\mathbb{R}^N}b\left(\frac12|\nabla\phi_0|^2\right)\nabla\phi_0\cdot\nabla\psi_n\,{\rm{d}}x
        =\int_{\mathbb{R}^N}b\left(\frac12|\nabla\phi_0|^2\right)\nabla\phi_0\cdot\nabla\psi\,{\rm{d}}x.\label{4.3}
    \end{align}
\end{lemma}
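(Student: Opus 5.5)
We need to prove Lemma \ref{lemma4.5}: the vector field $V:=b\bigl(\tfrac12|\nabla\phi_0|^2\bigr)\nabla\phi_0$ defines a bounded linear functional on $D^{1,2}(\mathbb{R}^N)$, tested against gradients. Since $\psi_n\to\psi$ in $D^{1,2}(\mathbb{R}^N)$ means $\nabla\psi_n\to\nabla\psi$ in $L^2(\mathbb{R}^N)$, it suffices to show $V\in L^2(\mathbb{R}^N)$. Then Cauchy--Schwarz gives
\begin{align}
\left|\int_{\mathbb{R}^N}V\cdot(\nabla\psi_n-\nabla\psi)\,{\rm d}x\right|\le\|V\|_{L^2(\mathbb{R}^N)}\|\nabla\psi_n-\nabla\psi\|_{L^2(\mathbb{R}^N)}\to0.\notag
\end{align}
Under this route the bound $|\nabla\psi|\le C$ is not needed. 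It only serves to keep $\psi$ in a setting where the integrals are a priori meaningful, for example when $\psi\in\mathcal{A}$ up to scaling.

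\textbf{Step 1: $V\in L^2(\mathbb{R}^N)$.} Compute $|V|^2=b\bigl(\tfrac12|\nabla\phi_0|^2\bigr)^2|\nabla\phi_0|^2$. Split $\mathbb{R}^N$ into the region $\{|\nabla\phi_0|^2\le1-\delta\}$ and its complement.

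On the first region, $b$ is continuous on $[0,\tfrac{1-\delta}{2}]$ and hence bounded by some $M_\delta$. Therefore $|V|^2\le M_\delta^2|\nabla\phi_0|^2$, which is integrable because $\phi_0\in D^{1,2}(\mathbb{R}^N)$.

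On the complement, we invoke the conclusion drawn after Lemma \ref{thm1}: Theorem \ref{thm2} and the effective-space argument give $\phi_0\in\mathcal{A}_{\rm effective}$, i.e. $|\nabla\phi_0|^2\le1-\epsilon$ a.e. Taking $\delta=\epsilon$ makes this region null, so $V\in L^2(\mathbb{R}^N)$ with $\|V\|_{L^2}\le b\bigl(\tfrac{1-\epsilon}{2}\bigr)\|\phi_0\|_{D^{1,2}}$.

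\textbf{Main obstacle.} The delicate point is Step 1 on the near-boundary region. Lemma \ref{thm1} by itself only says $|\{|\nabla\phi_0|^2=1\}|=0$ and gives the weighted bound \eqref{1.8}; neither yields a uniform gap. If one wants to avoid relying on a uniform gap, the fallback is to use the $L^1$ bound \eqref{1.8} instead of $L^2$, with the following three moves.

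First, from \eqref{1.8} and Cauchy--Schwarz with weight $b$,
\begin{align}
\left|\int_{\mathbb{R}^N} V\cdot\nabla\eta\,{\rm d}x\right|\le\Bigl(\int_{\mathbb{R}^N} b\,|\nabla\phi_0|^2\,{\rm d}x\Bigr)^{1/2}\Bigl(\int_{\mathbb{R}^N} b\,|\nabla\eta|^2\,{\rm d}x\Bigr)^{1/2},\notag
\end{align}
so the task reduces to controlling $\int b\,|\nabla\psi_n-\nabla\psi|^2$.

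Second, on $\{|\nabla\phi_0|^2\le1-\delta\}$ this quantity is at most $M_\delta\|\nabla\psi_n-\nabla\psi\|_{L^2}^2\to0$.

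Third, on the thin set $\{|\nabla\phi_0|^2>1-\delta\}$ we have $b\ge b(0)=1$ by monotonicity \eqref{2.1} and \eqref{1.3}. Hence $\int_{\{|\nabla\phi_0|^2>1-\delta\}}b\,|\nabla\phi_0|\,{\rm d}x$ is small uniformly as $\delta\to0$: the integrand is dominated by $b|\nabla\phi_0|^2/(1-\delta)$, which lies in $L^1$ by \eqref{1.8}, and the sets shrink to the null set $G^c(0)$. The integrals over this thin set can then be handled with the assumption $|\nabla\psi|\le C$ and, after passing to a subsequence with $\nabla\psi_n\to\nabla\psi$ a.e., a Vitali-type uniform integrability argument. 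This requires $\sup_n\|\nabla\psi_n\|_\infty<\infty$. That is where I expect the genuine difficulty, and I would first try to secure it by truncating $\psi_n$.
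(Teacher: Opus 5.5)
\textbf{Verdict: genuine gap.} Your main route is the paper's own argument: bound $b(\tfrac12|\nabla\phi_0|^2)$ by a constant $C(\delta)$ using a ``safe distance'', then apply Cauchy--Schwarz against $\|\phi_0\|_{D^{1,2}}$. The gap is exactly the one you flag, and citing Theorem~\ref{thm2} or $\mathcal{A}_{\rm effective}$ does not close it. Theorem~\ref{thm2} only asserts $|\nabla\phi_0|^2<1$. The proof of Lemma~\ref{thm1} only gives $\mathcal{L}(\frac{1-\delta_n}{2})\,|G^c(\delta_n)|\le\langle\rho,\phi_0\rangle$ (see \eqref{b3.4}), i.e.\ $|G^c(\delta_n)|\to0$, never $|G^c(\delta)|=0$ for a fixed $\delta>0$. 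The paper's ``thanks to $|G^c(\delta_n)|=0$'' in this proof, and its passage to $\phi_0\in\mathcal{A}_{\rm effective}$, rest on the same unproved step.

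In fact both the uniform gap and the lemma as literally stated fail for general $\rho\in\mathcal{A}^*$. Take $N\ge3$ and a point charge $\rho=c\,\delta_0$ with $c>0$; it is admissible, since $\mathcal{A}\hookrightarrow L^\infty$ and elements of $\mathcal{A}$ are continuous. For radial $\phi$, writing $\phi(0)=-\int_0^\infty\phi'\,{\rm d}r$ gives $I(\phi)=\int_0^\infty\bigl[\omega_Nr^{N-1}\mathcal{L}(\tfrac12\phi'^2)+c\,\phi'\bigr]{\rm d}r$, with $\omega_N=|\mathbb{S}^{N-1}|$. Minimize this integrand pointwise in $\phi'$, and note that spherical averaging does not increase $I$. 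The minimizer then satisfies $b(\tfrac12\phi_0'^2)\phi_0'\,r^{N-1}\equiv-c/\omega_N$, so $|\phi_0'(r)|\to1$ as $r\to0^+$ and there is no uniform gap. Now set $\psi_n(x)=(1-|x|/\epsilon_n)_+$ with $\epsilon_n\to0$. Then $\|\psi_n\|^2_{D^{1,2}}=\omega_N\epsilon_n^{N-2}/N\to0$, so $\psi_n\to\psi:=0$ and $|\nabla\psi|\le C$ holds trivially. Yet $\int_{\mathbb{R}^N}b(\tfrac12|\nabla\phi_0|^2)\nabla\phi_0\cdot\nabla\psi_n\,{\rm d}x=c$ for every $n$.

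So the hypothesis actually needed is $K:=\sup_n\|\nabla\psi_n\|_\infty<\infty$. This is presumably what was intended, since the proof of Theorem~\ref{T1} asserts $\|\nabla\psi_n\|_\infty\le C$ for $\psi_n=\xi_n*(\chi_n\psi)$. A bound on the limit $\psi$ alone is useless. Your plan to obtain the uniform bound by truncating $\psi_n$ cannot work, because the conclusion concerns the given sequence.

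Under this hypothesis your fallback is the right proof, and it closes without subsequences or a Vitali argument. First, $\|\nabla\psi\|_\infty\le K$ as well, being an a.e.\ limit of a subsequence. Let $E_\delta=\{|\nabla\phi_0|^2>1-\delta\}$; on $E_\delta$ one has $|\nabla\phi_0|\le|\nabla\phi_0|^2/\sqrt{1-\delta}$. Since $b$ is increasing,
\[
\Bigl|\int_{\mathbb{R}^N}b\bigl(\tfrac12|\nabla\phi_0|^2\bigr)\nabla\phi_0\cdot\nabla(\psi_n-\psi)\,{\rm d}x\Bigr|\le\frac{2K}{\sqrt{1-\delta}}\int_{E_\delta}b\bigl(\tfrac12|\nabla\phi_0|^2\bigr)|\nabla\phi_0|^2\,{\rm d}x+b\bigl(\tfrac{1-\delta}{2}\bigr)\|\phi_0\|_{D^{1,2}}\|\psi_n-\psi\|_{D^{1,2}}.
\]
Let $n\to\infty$ and then $\delta\to0^+$. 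The first term vanishes by dominated convergence, because the sets $E_\delta$ decrease to the null set $G^c(0)$ and the integrand is in $L^1$ by \eqref{1.8}.
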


\begin{proof}
Thanks to $|G^c(\delta_n)|=0$, we know that $b(\cdot)$ is finite with respect to $|\nabla\phi_0|$. Then we have
    \begin{align}
        \int_{\mathbb{R}^N}b\left(\frac12|\nabla\phi_0|^2\right)\nabla\phi_0\cdot\left(\nabla\psi_n-\nabla\psi\right)\,{\rm d}x&\le C(\delta)\int_{\mathbb{R}^N}\nabla\phi_0\cdot\left(\nabla\psi_n-\nabla\psi\right)\,{\rm d}x\notag\\
        &\le C(\delta)\|\phi_0\|_{D^{1,2}(\mathbb{R}^N)}\|\psi_n-\psi\|_{D^{1,2}(\mathbb{R}^N)}\to0.\notag
    \end{align}
    By Lebesgue’s Dominated Convergence Theorem we obtain \eqref{4.3}.
\end{proof}

\begin{remark}
    Lemma  \ref{thm1} establishes that the singular set is a null set for any $\rho\in\mathcal{A}^*$, without symmetry assumptions. However, to pass from the variational inequality \eqref{1.9} to Euler--Lagrange equation in the 
    strong sense, additional structure is needed. In Section \ref{s4} we restrict our attention to radially symmetric charge distributions, which constitute a physically natural and mathematically tractable framework.
\end{remark}

Lastly, in the following lemma, we prove the weak solution of \eqref{1.6} must be a minimizer of \eqref{1.5} for any Born--Infeld type theory $\mathcal{L}(s)$. Note that this lemma is obviously valid for \eqref{1.4} and \eqref{log model}.
\begin{lemma}\label{lemma 3.5}
    Let $\rho\in\mathcal{A}^*$. If $\phi\in\mathcal{A}$ is a weak solution of \eqref{1.6}, then $\phi=\phi_0$.
\end{lemma}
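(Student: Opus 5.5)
The plan is to show directly that a weak solution $\phi$ minimizes $I$ on $\mathcal{A}$, and then invoke the uniqueness of the minimizer from Theorem \ref{thm2}. The only ingredient is the convexity inequality \eqref{b2.2} combined with the identity $|\nabla\psi|^2-|\nabla\phi|^2\ge 2\nabla\phi\cdot(\nabla\psi-\nabla\phi)$.

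\textbf{Pointwise comparison.} Fix $\psi\in\mathcal{A}$ and set $s_1=\frac12|\nabla\phi|^2$, $s_2=\frac12|\nabla\psi|^2$. Where $s_1<\frac12$, the right inequality of \eqref{b2.2} and $b>0$ from \eqref{2.1} give
\begin{align}
\mathcal{L}\left(\tfrac12|\nabla\psi|^2\right)-\mathcal{L}\left(\tfrac12|\nabla\phi|^2\right)\ge b\left(\tfrac12|\nabla\phi|^2\right)\tfrac12\bigl(|\nabla\psi|^2-|\nabla\phi|^2\bigr)\ge b\left(\tfrac12|\nabla\phi|^2\right)\nabla\phi\cdot(\nabla\psi-\nabla\phi).\notag
\end{align}

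\textbf{The set where the gradient touches $1$.} For Definition \ref{weak solution} to make sense, the integrand $b(\frac12|\nabla\phi|^2)\nabla\phi\cdot\nabla\psi$ must be integrable for every $\psi\in\mathcal{A}$. Testing with $\psi=\phi$ shows $b(\frac12|\nabla\phi|^2)|\nabla\phi|^2\in L^1$, so $\{|\nabla\phi|^2=1\}$ is a null set. The pointwise inequality therefore holds a.e.

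\textbf{Integration.} Integrate the pointwise inequality. If $J(\psi)=+\infty$ there is nothing to prove, so assume $J(\psi)<\infty$. The right-hand side is integrable, being the difference of two $L^1$ terms controlled by the weak formulation. Applying \eqref{2.17} with test functions $\psi$ and $\phi$ gives
\begin{align}
J(\psi)-J(\phi)\ge\langle\rho,\psi-\phi\rangle,\notag
\end{align}
that is, $I(\psi)\ge I(\phi)$. Hence $\phi$ is a minimizer of \eqref{1.5} on $\mathcal{A}$.

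\textbf{Identification with $\phi_0$.} Since $J$ is strictly convex (Lemma \ref{lemma2.1}), the minimizer is unique. Concretely, if $I(\phi)=I(\phi_0)=m$, then by strict convexity of $\mathcal{L}$ on the set where $\nabla\phi\ne\nabla\phi_0$, we would get $I(\frac{\phi+\phi_0}2)<m$ unless $\nabla\phi=\nabla\phi_0$ a.e. So $\phi-\phi_0$ is constant, and because both functions vanish at infinity, $\phi=\phi_0$.

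\textbf{Main obstacle.} The delicate point is finiteness and integrability, specifically making sure $J(\phi)<\infty$ so that subtracting $J(\phi)$ is legitimate. I would get this from the left inequality in \eqref{b2.2} with $s_1=0$, which gives $\mathcal{L}(s)\le b(s)s$. Combined with $b(\frac12|\nabla\phi|^2)|\nabla\phi|^2\in L^1$, this yields $J(\phi)<\infty$.
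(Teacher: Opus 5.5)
Your proposal is correct and follows essentially the paper's route: the paper tests the weak formulation with $\psi=\phi$ and $\psi=\phi_0$, applies the same pointwise convexity inequality $b(\frac12|\nabla\phi|^2)\nabla\phi\cdot(\nabla\phi_0-\nabla\phi)\le\mathcal{L}(\frac12|\nabla\phi_0|^2)-\mathcal{L}(\frac12|\nabla\phi|^2)$ to get $I(\phi)\le I(\phi_0)$, and concludes by uniqueness of the minimizer. You compare against an arbitrary $\psi\in\mathcal{A}$ rather than only $\phi_0$, and you make explicit what the paper leaves implicit: that $\{|\nabla\phi|^2=1\}$ is null, that $J(\phi)<\infty$ via $\mathcal{L}(s)\le b(s)s$, and the strict-convexity uniqueness step.
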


\begin{proof}
    Since $\phi$ is a weak solution, for any $\psi\in\mathcal{A}$ we have
    \begin{align}
        \int_{\mathbb{R}^N}b\left(\frac12|\nabla\phi|^2\right)\nabla\phi\cdot\nabla\psi\,{\rm{d}}x=\langle\rho,\psi\rangle.\notag
    \end{align}
    Taking $\psi=\phi$ and $\psi=\phi_0$ respectively, we obtain
    \begin{align}
        \int_{\mathbb{R}^N}b\left(\frac12|\nabla\phi|^2\right)|\nabla\phi|^2\,{\rm{d}}x=\langle\rho,\phi\rangle,\quad
        \int_{\mathbb{R}^N}b\left(\frac12|\nabla\phi|^2\right)\nabla\phi\cdot\nabla\phi_0\,{\rm{d}}x=\langle\rho,\phi_0\rangle.\label{3.13}
    \end{align}
    Subtracting \eqref{3.13} and using the convexity of the functional, we get
    \begin{align}
        \langle\rho,\phi_0-\phi\rangle=\int_{\mathbb{R}^N}b\left(\frac12|\nabla\phi|^2\right)\nabla\phi\cdot(\nabla\phi_0-\nabla\phi)\,{\rm{d}}x\le\int_{\mathbb{R}^N}\mathcal{L}\left(\frac12|\nabla\phi_0|^2\right)-\mathcal{L}\left(\frac12|\nabla\phi|^2\right)\,{\rm d}x,\notag
    \end{align}
    which implies $I(\phi_0)\ge I(\phi)$. Hence the weak solution is precisely the minimizer.
\end{proof}

\section{General singular Born--Infeld type theories: From minimizers to weak solutions and regularity in the radial case}\label{s4}

Lemma \ref{thm1} establishes the structural properties of the minimizer for arbitrary $\rho\in\mathcal{A}^*$. However, to pass from the variational inequality \eqref{1.9} to the Euler--Lagrange equation, one needs to construct suitable test functions that can probe the singular set $G^c(0)$. In the non-radial case, the lack of symmetry prevents such a construction in general. Therefore, in this section we restrict to radially symmetric $\rho$, where the one-dimensional structure of the problem allows us to complete the proof. We note that all results established below apply in particular to the rational-function model \eqref{1.4} and logarithmic model \eqref{log model}.

It is worth mentioning that, in classical Born--Infeld theory, purely electrostatic configurations are not always compatible with non-symmetric sources when magnetic fields are consistently included in the full time-dependent theory \cite{Yang2024}. However, for radially symmetric charge distributions, the electrostatic ansatz is exact and self-consistent. This makes the radial setting a natural and physically meaningful starting point for developing the variational framework, before attempting more general configurations. Moreover, the radial reduction allows us to illustrate the mechanism of the monotonic approximation method in a transparent way and to obtain sharp regularity results. Extensions to non-radial sources will be discussed in future work.

We firstly give the definition of a radially distributed charge.

{\begin{definition}
    Let $\mathcal{A}$ be the function space defined in \eqref{1.7} and denote by $\mathcal{A}^*$ its dual.
For $\tau\in O(N)$ and $\phi\in\mathcal{A}$, define $\phi^\tau(x):=\phi(\tau x)$.
For $\rho\in\mathcal{A}^*$, define $\rho^\tau\in\mathcal{A}^*$ by
\begin{align}
\langle \rho^\tau,\phi\rangle := \langle \rho,\phi^{\tau^{-1}}\rangle,\quad\forall\phi\in\mathcal{A}.\notag
\end{align}
We say that $\rho$ is \emph{radially distributed} if $\rho^\tau=\rho$ for every $\tau\in O(N)$, and the radial-type of functional space is defined by
\begin{align}
    \mathcal{A}_{\rm radial}=\{\phi\in\mathcal{A}\,\big|\,\phi^\tau=\phi\,\,\text{for every}\,\,\tau\in O(N)\}.\notag
\end{align}
\end{definition}
Following the above definitions, we prove that radial minimizers are weak solutions.}

\medskip

{\emph Proof of Theorem \ref{T1}:}
\begin{proof}
    For any $\tau\in O(N)$, $\rho\in\mathcal{A}^*_{\rm radial}$ and $\rho^\tau=\rho$, we have $I(\phi^\tau_0)=I(\phi_0)$, so that $\phi_0\in\mathcal{A}_{\rm radial}$ by the uniqueness of minimum.

    Next we prove the minimizer $\phi_0(x)=\phi_0(r)$ with $r=|x|$ is the weak solution of \eqref{1.6}. Define 
    \begin{align}
        \mathcal{K}_{k_n}:=\left\{r\ge0\,\Big|\,1\ge|\phi_0'(r)|\ge1-\delta_n-\frac1{k_n}\right\},\quad k_n\ge(1-\delta_n)^{-1}.\notag
    \end{align}
    Notice that $\mathcal{K}_{k_n}$ is monotone decreasing with respect to $(\frac1{k_n}+\delta_n)$, so we have $|G^c(\delta_n)|=|\cap_{k_n\ge(1-\delta_n)^{-1}}\mathcal{K}_{k_n}|=0$ as $n\to\infty$.

Take the test function $\psi\in C^\infty_c(\mathbb{R}^N)\cap \mathcal{A}_{\rm radial}$ with support set $[0,R]$:
    \begin{align}
        \psi_{k_n}=-\int^\infty_r\psi'(z)[1-\chi_{{\cal K}_{k_n}(z)}]\,{\rm{d}}z,\quad\chi_{{\cal K}_k(z)}=\begin{cases}
            1,\quad z\in {\cal K}_{k_n},\\
            0,\quad z\notin{\cal K}_{k_n}.
        \end{cases}\notag
    \end{align}
    To proceed, for any $t$ small enough, $|\phi'_0+t\psi_{k_n}'|\le1-\delta_n$ as $r\in\mathcal{K}_{k_n}$, otherwise taking $t=(k_n\|\psi'_{k_n}\|_\infty)^{-1}$, we have
    \begin{align}
        |\phi'_0+t\psi'_k|\le|\phi_0'|+t\|\psi'\|_\infty\le1-\delta_n.\notag
    \end{align}
    Now, since $\phi_0$ is the minimizer of $I$ and $\psi'_{k_n}=\psi'(r)(1-\chi_{\mathcal{K}_{k_n}})$, we have 
    \begin{align}
        \lim_{t\to0}\frac{I(\phi_0+t\psi_{k_n})-I(\phi_0)}{t}&={\omega_N}\int^\infty_0b\left(\frac12(\phi'_0)^2\right){\phi_0'\psi'(1-\chi_{\mathcal{K}_{k_n}(s)})r^{N-1}\,{\rm{d}}r}-\omega_N\int_0^\infty\rho\psi_{k_n} r^{N-1}\,{\rm{d}}r\notag\\
        &=0.\notag
    \end{align}
    Since the set $\mathcal{K}_{k_n}$ enjoys 
    \begin{enumerate}
        \item $\mathcal{K}_{k_{n+1}}\subset\mathcal{K}_{k_n}$ as $k_{n+1}\ge k_n$; 
        \item $|\mathcal{K}_{k_n}|\to0$ as ${n}\to\infty$; 
        \item There exists $k_n$ such that as $n\to\infty$ we have $r\notin \mathcal{K}_{k_n}$ and $\chi_{\mathcal{K}_{k_n}}\to0$ a.e. in $r\in[0,\infty)$,
    \end{enumerate}
then we find that 
\begin{align}
    \int^\infty_0b\left(\frac12(\phi'_0)^2\right){\phi_0'\psi'(1-\chi_{\mathcal{K}_{k_n}(s)})r^{N-1}\,{\rm{d}}r}\to \int^\infty_0b\left(\frac12(\phi'_0)^2\right){\phi_0'\psi'r^{N-1}\,{\rm{d}}r}.\label{2.15}
\end{align}
On the other hand, in view of the traits of functional space $\mathcal{A}$, we conclude $\psi_{k_n}\to\psi$ in $\mathcal{A}$, such that
\begin{align}
    \langle\rho,\psi_{k_n}\rangle\to\langle\rho,\psi\rangle \quad\text{as} \quad n\to\infty.\label{2.16}
\end{align}
In view of \eqref{2.15}-\eqref{2.16}, we obtain the following weak formula,
\begin{align}
    \int_{\mathbb{R}^N}b\left(\frac12|\nabla\phi_0|^2\right){\nabla\phi_0\cdot\nabla\psi\,{\rm{d}}x}=\langle\rho,\psi\rangle,\notag
\end{align}
satisfying Definition \ref{weak solution} in the radial setting of $\rho$ and $\phi_0$.

Finally, we prove that for any $\psi \in \mathcal{A}_{\rm rad}$, the minimizer $\phi_0(r)$ is indeed a weak solution, without assuming $\psi$ to be $C_c^\infty$. To this end, we construct a sequence $(\psi_n)_n \subset C_c^\infty(\mathbb{R}^N)$ of radially symmetric, smooth, compactly supported functions such that $\psi_n \to \psi$ in $D^{1,2}(\mathbb{R}^N)$ and $\|\nabla \psi_n\|_\infty \le C$, where $C$ is not necessarily equal to $1$. Convolving with a smooth radially symmetric mollifier $\xi_n$ applied to $\chi_n \psi$, we set
\begin{align}
\psi_n = \xi_n * (\chi_n \psi),\notag
\end{align}
where $\chi_n(\cdot)= \chi(\cdot / n) : \mathbb{R}^N \to \mathbb{R}$ is a smooth radially symmetric function satisfying
\begin{align}
    \chi(x)=\begin{cases}
        1,\quad{\rm if}\,\,|x|\le1,\\
        0,\quad{\rm if}\,\,|x|\ge2.
    \end{cases}\notag
\end{align}
In light of Lemma \ref{lemma4.5}, it is suffice to deduce $\phi_0$ is a weak solution of \eqref{1.6} for $\psi\in\mathcal{A}_{\rm radial}$. Moreover, if we replace $\psi$ with $\phi_0$ in \eqref{2.17}, then $\phi_0$ is the weak solution for $\psi\in\mathcal{A}$.
\end{proof}

Having established that the radial minimizer is a weak solution, we now turn to its regularity properties. We deduce the regularity of $\phi_0$ by taking different assumptions of $\rho$. We firstly give a auxiliary lemma.
\begin{lemma}\label{lemma4.7}
    Assume $\rho \in L^d(\mathbb{R}^N)$ ($d \ge 1$) is radially symmetric. Then the weak solution $\phi_0$ of \eqref{1.6} satisfies $\phi_0 \in C^1([r_0, R])$ for every $0 < r_0 < R < \infty$; i.e., $\phi_0$ is $C^1$ away from the origin.
\end{lemma}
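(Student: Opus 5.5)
The plan is to reduce the weak formulation of Theorem \ref{T1} to a first-order ODE in the radial variable, integrate it explicitly, and then invert the monotone map $\sigma\mapsto b(\sigma^2/2)\sigma$.

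\textbf{Step 1: the flux identity.} By Theorem \ref{T1}, $\phi_0=\phi_0(r)$ is radial and satisfies \eqref{2.17} for all radial test functions. Writing \eqref{2.17} in polar coordinates with $\psi\in C_c^\infty(0,\infty)$ (extended radially) gives
\begin{align}
\int_0^\infty b\Bigl(\tfrac12\phi_0'(r)^2\Bigr)\phi_0'(r)\,r^{N-1}\psi'(r)\,{\rm d}r=\int_0^\infty\rho(r)\,\psi(r)\,r^{N-1}\,{\rm d}r.\notag
\end{align}
Set $F(r):=b(\frac12\phi_0'^2)\phi_0'r^{N-1}$. By \eqref{1.8} and Cauchy--Schwarz, $F\in L^1_{\rm loc}(0,\infty)$, and $\rho r^{N-1}\in L^1_{\rm loc}(0,\infty)$ because $\rho\in L^d$. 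The identity therefore says that the distributional derivative of $F$ is $-\rho r^{N-1}$. Hence $F$ agrees a.e. with an absolutely continuous function on $(0,\infty)$:
\begin{align}
F(r)=C_0-\int_{r_0}^r\rho(s)s^{N-1}\,{\rm d}s,\notag
\end{align}
for some constant $C_0$. In particular, $h(r):=F(r)/r^{N-1}$ is continuous on every $[r_0,R]$.

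\textbf{Step 2: inversion.} The function $g(\sigma):=b(\sigma^2/2)\sigma$ is odd on $(-1,1)$. Its derivative is $g'(\sigma)=b+\sigma^2 b'>0$ by \eqref{2.1}. Moreover $g(\sigma)\to\pm\infty$ as $\sigma\to\pm1$, since $b\to+\infty$. So $g$ is a homeomorphism $(-1,1)\to\mathbb{R}$, and a $C^1$-diffeomorphism. By Lemma \ref{thm1}, $|\phi_0'|<1$ a.e., so $\phi_0'(r)=g^{-1}(h(r))$ for a.e. $r$. The right-hand side is continuous on $[r_0,R]$, and $\phi_0$ is Lipschitz, hence absolutely continuous. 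Therefore $\phi_0'$ coincides a.e. with a continuous function, which gives $\phi_0\in C^1([r_0,R])$.

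\textbf{Step 3: strict bound and the main obstacle.} Since $h$ is bounded on the compact set $[r_0,R]$, we get $\sup_{[r_0,R]}|\phi_0'|=g^{-1}(\max|h|)<1$. This already provides the local strict spacelikeness needed later for Theorem \ref{T3}.

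The main obstacle is the first step: justifying that the weak formulation holds with radial test functions concentrated in an annulus, and that $F$ is locally integrable. Local integrability uses $b(\frac12\phi_0'^2)|\phi_0'|\le \frac12 b(\frac12\phi_0'^2)(1+\phi_0'^2)$. One still has to handle the region where $|\phi_0'|$ is small; there $b$ is bounded, because $b$ is increasing by \eqref{2.1}. The passage from a distributional derivative in $L^1_{\rm loc}$ to an absolutely continuous representative is standard (du Bois-Reymond lemma).
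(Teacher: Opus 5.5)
Your proposal is correct and follows essentially the paper's route. The paper likewise shows that the flux $\mathcal{B}=b\bigl(\tfrac12(\phi_0')^2\bigr)\phi_0' r^{N-1}$ is integrable on $(0,R)$, by splitting at $|\phi_0'|^2=\tfrac12$ and using \eqref{1.8}, and has weak derivative $-\rho r^{N-1}\in L^1$, so $\mathcal{B}\in W^{1,1}(0,R)\subset C([0,R])$. Your explicit inversion through the increasing bijection $g(\sigma)=b(\sigma^2/2)\sigma$ from $(-1,1)$ onto $\mathbb{R}$, on $[r_0,R]$ where $r^{N-1}$ is bounded below, supplies the step from continuity of the flux to continuity of $\phi_0'$ and to $\sup_{[r_0,R]}|\phi_0'|<1$; the paper leaves this implicit in the lemma and only carries it out later, via $\Phi^{-1}$, in the proof of Theorem \ref{T4}.
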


\begin{proof}
    For every $\psi\in\mathcal{A}_{\rm radial}$ and in the radial case, the weak formulation gives us
    \begin{align}
        +\infty>\int^\infty_0\rho\,\psi\,r^{N-1}{\rm{d}}r=\int^\infty_{0}b\left(\frac12(\phi'_0)^2\right){\phi_0'\cdot\psi'\, r^{N-1}\,{\rm{d}}x}:=\int^\infty_{0}\mathcal{B}\cdot\psi'{\rm{d}}r,\,\, \mathcal{B}=b\left(\frac12(\phi'_0)^2\right)\phi_0'r^{N-1}.\notag
    \end{align}
    Therefore, we find that, on the one hand, $\mathcal{B}'\in{L}^1(0,R)$ since $\rho\in L^d(\mathbb{R}^N)$. On the other hand, by utilizing \eqref{1.8} and defining $H=\{r\in(0,R)\,\big|\,|\phi_0'|^2\le\frac12\}$, we have
    \begin{align}
        \int^R_0\mathcal{B}\,{\rm d}r\le\int_{(0,R)\cap H}\mathcal{B}\,{\rm d}r+\int_{{(0,R)\cap H^c}}\mathcal{B}\,{\rm d}r\le C\left(\int_0^Rr^{N-1}\,{\rm{d}}r+\int^R_0b\left(\frac12(\phi'_0)^2\right)(\phi_0')^2r^{N-1}\,{\rm d}r\right)<\infty.\notag
    \end{align}
    Hence, $\mathcal{B}\in W^{1,1}(0,R)$, which is continuously embedded in $C([0,R])$.

    Consequently, $\mathcal{B}$ is continuous on $[0,R]$ and  there exist $\epsilon>0$ such that $|\phi_0'|\le1-\epsilon$ since
    \begin{align}
        \mathcal{B}\le C(R)\iff b\left(\frac12|\phi'_0|^2\right)\phi_0'r^{N-1}\le C(R)\notag
    \end{align}
    for some constant $C(R)>0$.
\end{proof}

To proceed, we assume that $\rho\in L^e(B_\xi(0))$ with $e\ge N$ and $\xi>0$, then the regularity of weak solution $\phi_0$ can be improved up to $r=0$.

\medskip

{\em Proof of Theorem \ref{T2}}.
\begin{proof}
    Taking the test function $\psi_R$ satisfies
    \begin{align}
        \psi_R=\begin{cases}
            R-|x|,&\quad |x|\le R;\\
            0,&\quad |x|>R,
        \end{cases}\notag
    \end{align}
    such that the weak formula in the radial case of charge is rewritten as
    \begin{align}
        -\frac1R\int^R_0\mathcal{B}\,{\rm d}r=-\frac1R\int^R_0b\left(\frac12(\phi'_0)^2\right)\phi_0'r^{N-1}\,{\rm }dr=\int^R_0\rho \,r^{N-1}\,{\rm d}r-\frac1R\int^R_0\rho\,r^N\,{\rm d}r.\label{4.9}
    \end{align}
    Since $\mathcal{B}$ is continuous shown by Lemma \ref{lemma4.7}, Mean Value Theorem gives us
    \begin{align}
        \lim_{R\to0^+}-\frac1R\int^R_0\mathcal{B}\,{\rm }dr=-b\left(\frac12(\phi_0'(R))^2\right)\phi_0'(R)R^{N-1}=\mathcal{B}(R).
    \end{align}
    In fact, the right-hand side of \eqref{4.9} will tend to $0$ as $R\to0^+$ by L'Hôpital's Rule, then $\mathcal{B}(R)=0$.

    Since the weak derivative of $\mathcal{B}$ equals to $-\rho(r)r^{N-1}$ and $\rho\in L^s,(s\ge1)$, we have
    \begin{align}
        \int^R_0\rho\,r^{N-1}\,{\rm d}r
=-\mathcal{B}(R)=0,\label{4.11}
\end{align}
such that 
\begin{align}
    b\left(\frac12(\phi_0'(R))^2\right)\phi_0'(R)\le& R^{1-N}\int^R_0|\rho|\,r^{N-1}\,{\rm d}r\notag\\
    \le&R^{1-N}\left(\int^R_0|\rho|^e r^{N-1}\,{\rm d}r\right)^\frac1e\cdot\left(\int^R_0r^{N-1}\,{\rm d}r\right)^{1-\frac1e}\notag\\
    \le&C R^{1-\frac Ne}\|\rho\|_{L^e(B_\xi(0))},\quad \text{for some }C>0\,\,\text{and }R<\xi.\notag
\end{align}
    Since $e>N$, so the last line of the above formula will tend to $0$ as  $R\to0^+$, leading to $\phi_0'(R)=0$ by \eqref{1.3}.
\end{proof}

Consequently, we conclude that $\phi_0\in C^1(\mathbb{R}^N,\mathbb{R})$. Actually, we can further assume $\rho\in C^0$ to enhance the regularity of the weak solution to be $C^2(\mathbb{R}^N,\mathbb{R})$.

{\em Proof of Theorem \ref{T4}}.

\begin{proof}
From the proof of Theorem \ref{T2} we recall the notation
    \begin{align}
        \mathcal{B}(r):=b\bigl(\tfrac12\phi_0'(r)^2\bigr)\,\phi_0'(r)\,r^{N-1},\notag
    \end{align}
which satisfies $\mathcal{B}\in C([0,R])$ for every $R>0$ by Lemma \ref{lemma4.7}. The weak formulation in the radial case reads
    \begin{align}
        \int_0^\infty \mathcal{B}(r)\,\psi'(r)\,{\rm d}r
        =\int_0^\infty \rho(r)\,\psi(r)\,r^{N-1}\,{\rm d}r,
        \quad\forall\,\psi\in C_c^\infty(0,\infty).\label{weak-B}
    \end{align}
    
Since $\mathcal{B}$ is continuous and the right-hand side of \eqref{weak-B} defines a locally integrable function $-\rho(r)r^{N-1}$, the one-dimensional equivalence between distributional and pointwise derivatives implies
    \begin{align}
        \mathcal{B}'(r)=-\rho(r)\,r^{N-1}\quad\text{a.e. in }(0,\infty).\label{B-prime}
    \end{align}
With the additional assumption $\rho\in C^0(0,\infty)$, the right-hand side of \eqref{B-prime} is continuous, whence $\mathcal{B}\in C^1(0,\infty)$.
    
Define the auxiliary function
    \begin{align}
        \Phi(t):=b\bigl(\tfrac12 t^2\bigr)\,t,\quad t\in[0,1).\notag
    \end{align}
By Proposition \ref{2.1}, $b(s)>0$ and $b'(s)>0$ for $s\in[0,\frac12]$, hence
    \begin{align}
        \Phi'(t)=b\bigl(\tfrac12 t^2\bigr)+b'\bigl(\tfrac12 t^2\bigr)\,t^2>0,
        \quad\forall\,t\in[0,1).\notag
    \end{align}
Consequently, $\Phi\in C^\infty[0,1)$ is a strictly increasing diffeomorphism onto its image, and its inverse $\Phi^{-1}$ is also $C^\infty$.
    
From the definition of $\mathcal{B}$ we have
    \begin{align}
        \phi_0'(r)=\Phi^{-1}\!\Bigl(\frac{\mathcal{B}(r)}{r^{N-1}}\Bigr),\quad r>0.\notag
    \end{align}
The function $r\mapsto r^{N-1}$ is smooth and non-vanishing on $(0,\infty)$. Since $\mathcal{B}\in C^1(0,\infty)$, we yield
    $\phi_0'\in C^1(0,\infty)$, i.e., $\phi_0\in C^2(0,\infty)$.
    
Differentiating $\mathcal{B}(r)=\Phi(\phi_0'(r))\,r^{N-1}$ and using 
    \eqref{B-prime}, we obtain for every $r>0$
    \begin{align}
        \Phi'(\phi_0'(r))\,\phi_0''(r)\,r^{N-1}+\Phi(\phi_0'(r))\,(N-1)\,r^{N-2}=-\rho(r)\,r^{N-1}.\notag
    \end{align}
Dividing by $r^{N-1}$ and recognizing the radial expression of the divergence operator gives precisely \eqref{1.10}.
    
It remains to discuss the regularity at the origin. From \eqref{4.11} we have
    \begin{align}
        \mathcal{B}(r)=-\int_0^r \rho(w)\,w^{N-1}\,{\rm d}w.\notag
    \end{align}
If $\rho$ is continuous at $0$, then $\rho(r)=\rho(0)+o(1)$ as $r\to0^+$, and
    \begin{align}
        \mathcal{B}(r)=-\frac{\rho(0)}{N}\,r^N+o(r^N),\quad r\to0^+.\notag
    \end{align}
Recall that $\Phi(t)=t+O(t^2)$ near $t=0$ because $b(0)=1$. Hence
    \begin{align}
        \phi_0'(r)=\Phi^{-1}\!\Bigl(\frac{\mathcal{B}(r)}{r^{N-1}}\Bigr)=-\frac{\rho(0)}{N}\,r+o(r),\quad r\to0^+.\notag
    \end{align}
This shows that $\phi_0'(0)=0$ (already known from Theorem \ref{T2}) and that
    \begin{align}
        \phi_0''(0)=\lim_{r\to0^+}\frac{\phi_0'(r)-\phi_0'(0)}{r}=-\frac{\rho(0)}{N}\notag
    \end{align}
exists and is finite. Hence, the continuity of $\phi_0''$ at $0$ follows from the continuity of $\rho$ and the equation \eqref{1.10} itself. Thus $\phi_0\in C^2(\mathbb{R}^N)$ and the Euler--Lagrange equation holds for all $r\ge 0$.
\end{proof}

To end up this section, it is noted that in Lemma \ref{lemma4.7}, we defined $\epsilon$ such that when $|\phi'_0|\le1-\epsilon$, $\phi_0\in C^1$. We will now analyze how this small perturbation depends on the charge density.

\medskip

{\em Proof of Theorem \ref{T3}.}
\begin{proof}
    Define 
    \begin{align}
        M:=-\frac{1}{r^{N-1}}\int^R_0\rho\,r^{N-1}\,{\rm d}r,\notag
    \end{align}
    such that \eqref{4.11} gives us
    \begin{align}
        \mathbb{B}(|\phi_0'|):=b\left(\frac12|\phi_0'|^2\right)|\phi_0'|=|M|.\label{4.12}
    \end{align}
    Note that the left-hand side of \eqref{4.12} is always monotonically increasing with respect to $\phi_0'$ by \eqref{1.3} and enjoys $\lim_{|\phi_0'|\to1^-}\mathbb{B}(\phi_0')=+\infty$, we conclude that the inverse function of $\mathbb{B}(|\phi_0'|)$, that is, $\mathbb{B}^{-1}\left(|M|\right)$, is also monotonically increasing and continuous(continuity is guaranteed by Lemma \ref{lemma4.7}). Therefore, from \eqref{4.12} we have
    \begin{align}
        |\phi_0'|\le\mathbb{B}^{-1}\left(|M|\right),\quad 0<|M|\le \frac{N^{e^{-1}-1}}{r_0^{N-1}}\|\rho\|_{L^e(B_\xi(0))}\cdot R^{N(1-e^{-1})}:=|M_0|\label{4.13}.
    \end{align}
    Taking $\epsilon=1-\mathbb{B}^{-1}\left(|{M}|\right)$, we know that the upper limit of $|\phi_0'|$ depends on the selection of spatial coordinates, spatial dimensions and charge density. 

    We finishes the proof.
\end{proof}

\begin{remark}
    There exist important observations for \eqref{4.13}: 
    \begin{enumerate}
        \item As $r_0\to0^+$, $\mathbb{B}^{-1}(|M|)$ will increase, reducing $\epsilon$ and leading to $|\phi_0'|\to1^-$ more and more closely.
        \item It is easy to see that $R$ also has a significant impact on $|\phi_0'|$ and $\epsilon$, implying the nature of the distant space will affect the spacelikeness of $\phi_0$ near the origin.
    \end{enumerate}
\end{remark}

Here we use the logarithmic model \eqref{log model} as an example for the construction of $\mathbb{B}^{-1}(|M|)$. This theory meets all the pre-established conditions for the Born--Infeld type theory as stipulated in this article. Therefore, it can seamlessly fit into our framework. 

In light of \eqref{4.12}, we have
    \begin{align}
        \mathbb{B}(|\phi_0'|)=\frac{|\phi_0'|}{{1-|\phi_0'|^2}},\notag
    \end{align}
    giving rise to
\begin{align}
    (\mathbb{B}^{-1}(|M|))^2=\frac{2}{1+\sqrt{1+4|M|^2}},\notag
\end{align}
which is obviously monotone increasing with respect to $|M|$. Therefore, the perturbation parameter of spacelikeness estimate of $|\phi_0'|$ enjoys
\begin{align}
    \mathbb{B}^{-1}_{\rm max}(|M|)=\left(\frac{2}{1+\sqrt{1+4|M|^2}}\right)^\frac12\le\epsilon<1,\notag
\end{align}
where $|M_0|$ is defined in \eqref{4.13}.

\section{Conclusion}\label{s5}

In this paper, in the setting of an electrostatic field in Maxwell action, that is, $s=\frac12{|\nabla\phi|^2}$, we develop a unified variational framework for a class of singular Born--Infeld type functionals \eqref{1.5} whose Lagrangians blow up at the constraint boundary $s=\frac12$ for the Born--Infeld type theory $\mathcal{L}(s)$ with truncation threshold $s\le\frac12$. By introducing a monotonic approximation method in Sections \ref{s3} and \ref{snew}, we have shown that the unique minimizer $\phi_0$ of the energy functional \eqref{1.5} avoids the singular boundary (Lemma  \ref{thm1} and Lemma \ref{lemma2.1}). We have summarized the signatures and advantages of this method in Remarks \ref{remark3.1} and \ref{remark3.2}. Furthermore, this structural property is essential to establish a variational inequality (see Lemma  \ref{thm1}) and to identify the minimizer as a weak solution to the associated Euler--Lagrange equation under radial symmetry (see Theorem \ref{T1}). In particular, the minimizer $\phi_0$ satisfies a strict spacelikeness condition almost everywhere (see Theorem \ref{T3}), this enables us to present the conclusion that $G^c(0)=\{x\in\mathbb{R}^N\,|\,|\nabla\phi_0|^2=1\}$ is a null set obtained through the monotonic approximation method in Lemma  \ref{thm1} in a quantitative manner. Moreover, we have obtained optimal $C^1$ and $C^2$ regularity results in Theorems \ref{T2} and \ref{T4} for the weak solution and provide an estimate of the perturbation parameter $\epsilon$ in the proof of Theorem \ref{T3}, which measures the distance from the singular barrier in terms of the physical data of the problem. These results extend the classical Born--Infeld theory to a broader class of singular models and lay a rigorous foundation for future studies on non-radial charge distributions and other related nonlinear field equations.

Beyond their purely mathematical interest, the system \eqref{1.6} carries a deep physical motivation. The Born--Infeld type Lagrangian, with its characteristic constraint, not only serves as a generalization of classical electrodynamics but also plays a fundamental role in cosmology and string theory. In particular, the effective dynamics of {\em tachyon} fields in string theory and Born--Infeld inspired cosmological models are governed by actions of the Dirac--Born--Infeld form, where the constraint $|\nabla\phi|\le1$ emerges as a natural consistency condition for the causal structure of the theory \cite{Padmanabhan2002,Padmanabhan2002b}. Such models have been extensively studied in the context of dark energy and the late-time acceleration of the universe, where the singular behavior of the Lagrangian at the boundary precisely encodes the stiff equation of state that prevents the field from violating the speed limit \cite{Pasqua2012}. Our results therefore provide a rigorous mathematical foundation for the variational analysis of these physical models: the monotonic approximation method confirms that the field never saturates the causal bound, ensuring the stability and regularity of the corresponding cosmological solutions. This connection highlights the broader significance of the framework developed in this paper, bridging nonlinear electrodynamics, cosmology, and the calculus of variations for singular functionals.

\end{document}